\documentclass[a4paper,11pt,reqno]{article}
\usepackage{relsize}
\usepackage{cite}
\usepackage{color}
\usepackage{mathtools}
\usepackage[dvipsnames]{xcolor}

\usepackage{hyperref, enumitem}

\hypersetup{
  colorlinks   = true, 
  urlcolor     = blue, 
  linkcolor    = Purple, 
  citecolor   = red 
}
\usepackage{amsmath,amsthm,amssymb}

\usepackage[margin=2.5cm]{geometry}

\usepackage{graphicx}
\usepackage[T1]{fontenc}
\usepackage{authblk}
\usepackage[english]{babel}
\usepackage{stackrel}

\hyphenation{parti-cular}

\theoremstyle{definition}
\newtheorem{theorem}{Theorem}[section]

\newtheorem{proposition}[theorem]{Proposition}
\newtheorem{corollary}[theorem]{Corollary}
\newtheorem{lemma}[theorem]{Lemma}
\newtheorem{definition}[theorem]{Definition}

\newtheorem{remark}[theorem]{Remark}

\newcommand{\C}{\mathcal{C}}

\newcommand{\F}{\mathbb{F}}
\newcommand{\R}{\mathbb{R}}

\newcommand{\N}{\mathbb{N}}

\newcommand{\wt}{\mathrm{wt}}

\newcommand{\M}{\mathcal M}
\newcommand{\mA}{\mathcal A}
\newcommand{\B}{\mathcal B}

\newcommand{\Fq}{\mathbb{F}_q}

\DeclareMathOperator{\PG}{PG}

\title{Strong blocking sets and minimal codes from expander graphs}

\author[1]{Noga Alon\thanks{NA is partially supported  by NSF grant DMS-2154082 and BSF grant 2018267.}}
\affil[1]{Department of Mathematics, Princeton University, United States of America}

\author[2]{Anurag Bishnoi} 

\affil[2]{Delft Institute of Applied Mathematics, Delft University of Technology, Netherlands}

\author[3]{Shagnik Das\thanks{SD is supported by Taiwan NSTC grant 111-2115-M-002-009-MY2.}}

\affil[3]{Department of Mathematics, National Taiwan University, Taiwan}

\author[4]{Alessandro Neri\thanks{AN is supported by the Research Foundation - Flanders (FWO) grant 12ZZB23N.}}

\affil[4]{Department of Mathematics: Analysis, Logic and Discrete Mathematics, Ghent University, Belgium}

\begin{document}
\maketitle

\begin{abstract}
A strong blocking set in a finite projective space is a set of points that intersects each hyperplane in a spanning set. We provide a new graph theoretic construction of such sets: combining constant-degree expanders with asymptotically good codes, we explicitly construct strong blocking sets in the $(k-1)$-dimensional projective space over $\mathbb{F}_q$ that have size $O( q k )$. 
Since strong blocking sets have recently been shown to be equivalent to minimal linear codes, our construction gives the first explicit construction of $\mathbb{F}_q$-linear minimal codes of length $n$ and dimension $k$, for every prime power $q$, for which $n = O (q k)$. This solves one of the main open problems on minimal codes. 
\end{abstract}

\section{Introduction}
A blocking set in a finite projective or affine space is a set of points that intersects every hyperplane.  The study of these objects is a classic topic in finite geometry~\cite{brouwer1978blocking, blokhuis2011blocking}, with many applications in coding theory, combinatorics and computer science. One can strengthen this notion to that of a \emph{strong blocking set} by requiring that the intersection with every hyperplane is not just nonempty, but forms a spanning set for that hyperplane. For example, in a projective plane, the set of all points on a single line is a blocking set, while the set of all points on three non-concurrent lines is a strong blocking set. This special kind of blocking set has also appeared in the literature under the names of generating sets~\cite{fancsali2014lines, heger2015search} and cutting blocking sets~\cite{bonini2021minimal,alfarano2022geometric,bartoli2022cutting}, but in this paper we follow the nomenclature of~\cite{davydov2011linear, heger2021short}.
 
 Strong blocking sets have recently been shown to be in one-to-one correspondence with minimal codes~\cite{alfarano2022geometric, tang2021full}, a notion from coding theory. 
 A linear code is simply a vector subspace of $\mathbb{F}_q^n$. 
 A codeword in a linear code is called minimal if its support does not contain the support of any other codeword apart from its scalar multiples. 
 Minimal codewords in a linear code have been studied for their applications in decoding algorithms~\cite{hwang1979decoding} and cryptography~\cite{massey1993minimal, chabanne2013towards}. 
 Determining the set of minimal codewords in a linear code is a difficult task that has only been achieved for a few families of linear codes, and this has led to the study of \emph{minimal codes}, where \textit{every} non-zero codeword is minimal (see, for example,~\cite{chabanne2013towards}). 
 Recently, minimal codes have also been linked to perfect hash families, and in particular the trifference problem~\cite{Bishnoi2023trifference}, which have important applications in computer science (see for example~\cite{wang2001explicit} and the references therein). 
 
The main problem is to construct minimal codes of a given dimension $k$ of the shortest possible length $n$. It is known that any strong blocking set in the $(k - 1)$-dimensional projective space obtained from $\mathbb{F}_q^k$, denoted by $\mathrm{PG}(k - 1, q)$, must have size at least $(q + 1)(k - 1)$~\cite{alfarano2022three}.
Using the aforementioned connection, this implies that any minimal code of length $n$ and dimension $k$ over $\mathbb{F}_q$ must satisfy $n \geq (q + 1)(k - 1)$. 
Therefore, we would like to construct minimal codes of length close to this lower bound. 
Providing additional motivation for this problem is the fact that minimal codes whose length $n$ is at most linear in $k$ (for a fixed $q$) give rise to asymptotically good error-correcting codes~\cite{alfarano2022geometric}. 
While it is easy to show the existence of such short minimal codes using the probabilistic method, it is a challenging and central open problem to give good explicit constructions~\cite{cohen2013minimal}. 
Many constructions of minimal codes have appeared in the last few years~\cite{alfarano2022geometric, bartoli2019minimal, cohen2013minimal, ding2015linear, fancsali2014lines}, but their lengths remain considerably larger than the theoretical lower bound. 
 
Over the binary field, minimal codes are equivalent to linear intersecting codes~\cite{cohen1985linear, cohen1994intersecting}, which are codes with the property that the supports of any two non-zero codewords have non-empty intersection, but over larger fields it is a more restrictive notion than intersecting codes~\cite{cohen2013minimal}. 
 By this equivalence, we already have an explicit construction of minimal codes for $q = 2$ with $n$ a linear function of $k$~\cite[Theorem 2.3]{cohen1994intersecting}. 
 Bartoli and Borello~\cite[Corollary 3.3]{bartoli2023small} recently gave an explicit construction of strong blocking sets with size linear in the dimension, for any fixed $q \geq 3$, but the dependency on $q$ in their construction is not linear: they proved that for every prime power $q$, there exists an infinite sequence of dimensions $k$ for which they give an explicit construction of a strong blocking set in the projective space $\mathrm{PG}(k - 1, q)$ of length roughly $q^4 k/4$. The same construction appears in an earlier work of Cohen, Mesnager and Randriam~\cite{cohen2016yet}, and the main idea is to concatenate algebraic geometric codes with the simplex code. The argument used in~\cite{bartoli2023small} also has the limitation that it can at best give an explicit construction of size approximately $q^2 k$.

In this paper, we provide a novel graph-theoretic construction combining linear codes with graphs to produce minimal codes. By using explicit constructions of asymptotically good linear codes and constant-degree expander graphs, we then obtain, for some absolute constant $c$, the first explicit construction of strong blocking sets of size $cqk$ in the projective space $\mathrm{PG}(k-1,q)$, and thus also of a $k$-dimensional minimal code over $\Fq$ of length at most $cqk$. By optimising the constant, we can show that our construction improves the previous best explicit constructions for every fixed $q \ge 7$.

 There is a rich history of using expander graphs in the construction of asymptotically good linear codes~\cite{alon1992construction, sipser1996expander, tanner1981recursive}, and we extend this line of research by showing that they can also be used to construct minimal codes. Central to our construction is the notion of vertex integrity of a graph, which measures how many vertices need to be removed from a graph to break it into small components (see Section~\ref{sec:integrity} for a precise definition and references), and we prove a new lower bound on this parameter for $d$-regular graphs in terms of their eigenvalues. This in particular implies that the vertex integrity of constant-degree $n$-vertex expanders is linear in $n$. 
 
 Finite geometry has often been used to give extremal, or near extremal, constructions of graphs with respect to some property (for example, in Tur\'{a}n and Ramsey problems). 
 We show that the other direction can also be fruitful, as in our construction we use extremal graphs to pick a subset of lines whose union has desirable intersection properties with hyperplanes in a finite projective space. This novel construction has also been used to give explicit constructions of certain affine blocking sets~\cite{Bishnoi2023trifference}, and we expect that it will lead to many new results in finite geometry. 

\subsection{Outline}

In Section~\ref{sec:prelim}, we give the necessary background on codes, blocking sets and expander graphs. We introduce the integrity of a graph in Section~\ref{sec:integrity}, and prove our lower bound for regular graphs. In Section~\ref{sec:construction}, we describe our new explicit construction, proving the main result of this paper. In Section~\ref{sec:AGcodes}, we optimize the size of our construction by using algebraic-geometric codes, almost Ramanujan graphs, and field reduction. Finally, in Section~\ref{sec:conclusion}, we summarize our results and highlight some possible directions for further research.

\section{Preliminaries}
\label{sec:prelim}

In this section we recall some basic notions and preliminary results from coding theory, with a focus on minimal linear codes and on how they can be viewed geometrically. We also recall the notion of expander graphs and some explicit constructions. For the rest of this paper, we shall assume that $q$ is a prime power.

\subsection{Error-correcting codes and minimal codes}

Let us fix $\Fq$ to be the finite field with $q$ elements and let $n \in \mathbb N$. 

\begin{definition}
 The \textbf{(Hamming) support} of a vector $v\in \Fq^n$ is the set 
 $$ \sigma(v):=\{ i \, : \, v_i\neq 0\} \subseteq [n].$$
 The \textbf{(Hamming) weight} of $v$ is
 $$\wt(v):=|\sigma(v)|.$$
\end{definition}

The Hamming weight induces a  distance on $\Fq^n$, given by $d(u,v):=\wt(u-v)$. This is known as the \textbf{Hamming distance} and it is fundamental in the theory of error-correcting codes. 

\begin{definition}
An $[n,k,d]_q$ code $\C$ is a $k$-dimensional subspace of $\Fq^n$, and
$$d \coloneqq \min\{\wt(v) \, : \, v \in \C\setminus\{0\}\}$$
is called the \textbf{minimum distance} of $\C$. The elements of $\C$ are called \textbf{codewords}. Moreover, a \textbf{generator matrix} for $\C$ is a matrix $G\in \Fq^{k\times n}$ such that 
$$\C=\{uG\,:\, u \in \Fq^k\};$$
that is, the rows of $G$ span $\C$.
\end{definition}

\begin{definition}
Let $\{n_i \}_{i \geq 1}$ be an increasing sequence of lengths and suppose that there exist sequences $\{k_i\}_{i \geq 1}$ and $\{d_i\}_{i \geq 1}$ such that
for all $i \geq 1$ there is an $[n_i, k_i, d_i]_q$ code $\C_i$. 
Then the sequence $\{\C_i\}_{i \geq 1}$ is called an \textbf{$(R,\delta)_q$-family of codes}, where 
the \textbf{rate} of this family is defined as 
\[R \coloneqq \liminf_{i \rightarrow \infty} \frac{k_i}{n_i},\]
and the \textbf{relative distance} is defined as 
\[\delta \coloneqq \liminf_{i \rightarrow \infty} \frac{d_i}{n_i}.\]
\end{definition}

One of the central problems on error-correcting codes is to understand the trade-off between the rate and the relative distance of codes. 
A family of codes for which $R > 0$ and $\delta > 0$, is known as an \textit{asymptotically good code}. 
An easy probablistic argument shows the existence of such codes for every $\delta \in [0, 1 - 1/q)$ and $R = 1 - H_q(\delta)$, where
\[H_q(x) \coloneqq x \log_q(q - 1) - x \log_q(x)  - (1 - x) \log_q(1 - x),\]
is the $q$-ary entropy function, defined on the domain $0 \leq x \leq 1 - 1/q$. 
This is known as the Gilbert-Varshamov bound. 
The first explicit construction of asymptotically good codes was given by  Justesen \cite{justesen1972class}, who showed that for every $0 < R < 1/2$, there is an explicit family of codes with rate $R$ and relative distance $\delta \geq (1 - 2R) H_q^{-1}\left(\frac{1}{2}\right)$. 
Note that for any prime power $q$, $H_q^{-1} \left( \frac{1}{2} \right) \geq H_2^{-1} \left( \frac{1}{2} \right) > 0.11$, and thus there are absolute constants $R, \delta > 0$, not depending on $q$, for which we have an explicit construction of a family of codes with rate $R$ and relative distance $\delta$. 
Improving the values of the rate $R$ and the relative distance $\delta$ for which there is an explicit construction, and reducing the computational complexity of these constructions, has been an active area of research in coding theory since the 1970s (see for example \cite{alon1992construction, ta2017explicit, tsfasman1982modular}). 
One of the most significant developments in the area was the use of modular curves to show that, for $q \geq 49$, there are explicit constructions of linear codes over $\mathbb{F}_q$ that are even better than the probabilistic ones (see~\cite{tsfasman2013algebraic, couvreur2021algebraic} for some recent surveys on these constructions).

In this paper, we study a special class of codes called \emph{minimal (linear) codes}. These are codes with interesting features from a combinatorial point of view.

\begin{definition}
 Let $\C$ be an $[n,k,d]_q$ code. A nonzero codeword $v \in \C$ is said to be \textbf{minimal} (in $\C$) if $\sigma(v)$ is minimal with respect to the inclusion in the set 
 $$ \sigma(\C) \coloneqq \{ \sigma(c) \, : \, u \in \C\setminus\{0\}\}.$$
 The code $\C$ is a \textbf{minimal linear code} if all its nonzero codewords are minimal.
\end{definition}

Minimal codewords were first studied by Hwang for decoding purposes \cite{hwang1979decoding}. Later, they were analyzed by Massey in connection with secret sharing schemes \cite{massey1993minimal}. Since then, minimal codewords and minimal codes attracted renewed interest within the coding theory community (see for example \cite{alfarano2022geometric, ashikhmin1998minimal, bartoli2019minimal, cohen2013minimal}). These concepts were further studied from a combinatorial point of view, since they correspond to circuits in the matroid associated to the dual code~\cite{dosa2004maximum}. 
Recently, minimal codes have also been linked to linear trifferent codes \cite{Bishnoi2023trifference}, which are a special case of perfect hash families \cite{wang2001explicit}.

\subsection{Projective systems and strong blocking sets}

In this section we briefly  describe the geometric dual approach to coding theory, where linear codes can be identified with set of points in a suitable projective space. 
For $k > 1$, the finite projective space of dimension $k - 1$ over the finite field $\mathbb{F}_q$ is defined as 
\[\mathrm{PG}(k - 1, q) \coloneqq \left(\mathbb{F}_q^k \setminus \{{0}\} \right) / \sim,\]
where 
$u \sim v$ if and only if $u = \lambda v$ for some non-zero $\lambda \in \mathbb{F}_q$ (in some circles the same object will be denoted by $\mathbb{P}^{k-1}(\mathbb{F}_q)$). 
The equivalence class that a non-zero vector $v$ belongs to is denoted by $[v]$. 
The $1$-dimensional, $2$-dimensional, $\dots$, $(k - 1)$-dimensional vector subspaces of $\mathbb{F}_q^k$ correspond to the points, lines, $\dots$, hyperplanes of $\mathrm{PG}(k - 1, q)$. 
We denote the span of a subset $S$ of points in a projective space by $\langle S \rangle$ and the dimension $\dim(\langle S \rangle)$ is one less than the vector space dimension of the corresponding vector subspace. 
For example, the span of two distinct points $P, Q$ in a projective space, which we will also denote by $\langle P, Q \rangle$, is a $1$-dimensional projective subspace that we refer to as the line joining $P$ and $Q$. 

\begin{definition}\label{def:projective_system}
A \textbf{projective} $[n,k,d]_q$ \textbf{system} is a (multi)set of $n$ points, $\M \subseteq \PG(k-1,q)$, such that 
$\langle \M\rangle=\PG(k-1, q)$ and 
$$d=n-\max\{|H\cap \M| : H \text{ is a hyperplane}\}.$$
\end{definition}

The term projective system and the notation used come from the correspondence with linear codes. Indeed, a projective $[n,k,d]_q$ {system} is simply a dual interpretation of a nondegenerate $[n,k,d]_q$ code. More precisely, an $[n,k,d]_q$ code $\C$ is \textbf{nondegenerate} if there is no identically zero coordinate in $\C$. In other words, $\C$ is not contained in any principal hyperplane $H_i:=\{v \in \Fq^n \,:\, v_i=0\}$.

The aforementioned correspondence -- up to equivalence -- comes from putting representatives of the points of the projective system as columns of a $k\times n$ matrix, and considering the code generated by (the rows of) this matrix. Vice versa, given a generator matrix of a nondegenerate code, we can obtain the associated projective system by taking the columns of such a matrix as a multiset of points in $\PG(k-1,q)$. 
There is more work required to make this correspondence well-defined, and 
we refer the reader to~\cite[Theorem 1.1.6]{tsfasman2013algebraic} for a formal treatment of the correspondence. Due to this correspondence, a sequence $\{\mathcal M_i\}_{i\in \N}$ of projective systems is called an \textbf{$(R,\delta)_q$-family of projective systems} if the corresponding family of codes is an $(R,\delta)_q$-family of codes.

We now define the main finite geometric object studied in this paper. 

\begin{definition}
 A set $\mathcal M\subseteq \PG(k-1,q)$ is said to be a \textbf{strong blocking set} if 
 $$\langle H\cap \M\rangle=H,$$
 for every hyperplane $H$ of $\PG(k-1,q)$.
\end{definition}

\begin{remark}
In the vector space notation, a strong blocking set in $\F_q^k$ is a collection of $1$-dimensional vector subspaces that intersects every $(k-1)$-dimensional vector subspace in a spanning set. 
\end{remark}

\begin{theorem}[see \cite{alfarano2022geometric}, \cite{tang2021full}]
 Let $\C$ be a nondegenerate $[n,k,d]_q$ code and let $G=(g_1 \mid \ldots \mid g_n) \in \Fq^{k\times n}$ be any of its generator matrices. The following are equivalent:
 \begin{enumerate}
     \item $\C$ is a minimal code;
     \item $\M=\{[g_1],\ldots,[g_n]\}$ is a strong blocking set in $\PG(k-1,q)$.
 \end{enumerate}
\end{theorem}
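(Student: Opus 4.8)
The plan is to prove both implications at once by translating everything to the level of supports, passing between codewords of $\C$ and hyperplanes of $\PG(k-1,q)$. Write $g_1,\dots,g_n\in\Fq^k$ for the columns of $G$, viewed as column vectors, and identify each codeword with a row vector $u\in\Fq^k$ via $u\mapsto uG$. Since $\C$ is $k$-dimensional, $G$ has full row rank $k$, so this map is a bijection from $\Fq^k$ onto $\C$ that sends scalar multiples to scalar multiples. The $i$-th coordinate of $uG$ equals the product $u g_i$, hence
$$\sigma(uG)=\{\, i\in[n] : u g_i\neq 0\,\}.$$
Nondegeneracy guarantees that no column $g_i$ is zero, so each $[g_i]$ is a well-defined point of $\PG(k-1,q)$ and $\M$ is genuinely a (multi)set of $n$ points, while full rank gives $\langle\M\rangle=\PG(k-1,q)$.

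Next I would set up the dual dictionary. Every hyperplane of $\PG(k-1,q)$ has the form $H_w=\{[x] : w x=0\}$ for some nonzero row vector $w$, unique up to scalar, and the points of $\M$ lying on $H_w$ are exactly the $[g_i]$ with $w g_i=0$, i.e.\ the indices in $[n]\setminus\sigma(wG)$. The crucial observation is a direct reading of what it means for $H_w\cap\M$ to fail to span: this happens precisely when all the vectors $\{g_i : i\notin\sigma(wG)\}$ lie in some proper subspace of the $(k-1)$-dimensional space $H_w$, equivalently in some $(k-2)$-dimensional subspace, equivalently in an intersection $H_w\cap H_{w'}$ with $w'$ not a scalar multiple of $w$. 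Writing this containment out coordinatewise, the statement that $g_i\in H_{w'}$ for every $i$ with $w g_i=0$ says exactly $[n]\setminus\sigma(wG)\subseteq[n]\setminus\sigma(w'G)$, that is, $\sigma(w'G)\subseteq\sigma(wG)$.

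Combining these observations yields the key equivalence
$$\langle H_w\cap\M\rangle=H_w \iff \text{there is no } w'\not\parallel w \text{ with } \sigma(w'G)\subseteq\sigma(wG).$$
By the bijection of the first paragraph, the condition $w'\not\parallel w$ is the same as saying that $w'G$ is a nonzero codeword which is not a scalar multiple of $wG$, so the right-hand side is precisely the assertion that $wG$ is a minimal codeword. Since $[w]$ parametrises both the hyperplanes $H_w$ and the support-classes of nonzero codewords $wG$ (note $\sigma(wG)$ and $H_w$ depend only on $[w]$), quantifying over all hyperplanes on the left and all nonzero codewords on the right gives the equivalence of (1) and (2). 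The step I expect to require the most care is the middle of the second paragraph: verifying that a proper subspace carrying all the blocking points can always be taken to be of the form $H_w\cap H_{w'}$, and that the complementation reverses the inclusion in the correct direction. I would also double-check that the minimality convention used (no other codeword support is contained in $\sigma(wG)$ except those of scalar multiples) lines up with the ``$w'\not\parallel w$'' clause, which is exactly where the bijectivity of $u\mapsto uG$ coming from full row rank is essential.
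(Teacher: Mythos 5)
The paper itself gives no proof of this theorem --- it is quoted from the cited references --- so there is no internal argument to compare against; your proposal is the standard duality proof (essentially what those references do), and it is correct in its main structure. The dictionary $\sigma(uG)=\{i: u g_i\neq 0\}$, the parametrisation of hyperplanes as $H_w$, the observation that a proper subspace of $H_w$ containing $H_w\cap\M$ can be enlarged to a codimension-$2$ space of the form $H_w\cap H_{w'}$ with $w'\not\parallel w$, and the complementation step $\{i: wg_i=0\}\subseteq\{i: w'g_i=0\}\iff\sigma(w'G)\subseteq\sigma(wG)$ are all sound, as is your use of nondegeneracy and full row rank to make $\M$ a well-defined spanning multiset and $u\mapsto uG$ a bijection respecting proportionality.

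The one step you flag but do not carry out is genuinely needed and should be written down. Your chain establishes that (2) is equivalent to: for every $w\neq 0$ there is no $w'\not\parallel w$ with $\sigma(w'G)\subseteq\sigma(wG)$. The paper's formal definition of a minimal codeword instead forbids any nonzero codeword whose support is \emph{strictly} contained in $\sigma(wG)$. The two are equivalent, but the direction needed for (1) $\Rightarrow$ (2) is not tautological: one must show that if $c\neq 0$ is not proportional to $v$ and $\sigma(c)\subseteq\sigma(v)$, then some codeword has support strictly inside $\sigma(v)$. This follows by picking $i\in\sigma(c)$ and considering $v-(v_i/c_i)\,c$, which is a nonzero codeword (since $c\not\parallel v$) supported inside $\sigma(v)\setminus\{i\}$. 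Without this line your argument proves the equivalence of (2) with the ``no non-proportional sub-support'' formulation rather than with (1) as formally defined; with it, the proof is complete.
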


The main and most relevant problem -- from both a coding theoretic and geometric point of view -- is the construction of small strong blocking sets, or, equivalently, of short minimal codes. 
The first step is to ask how small a strong blocking set can be. Answers to this question are partial and given by the following results. 
The first one is a general lower bound observed in~\cite{alfarano2022geometric}, proved using the Combinatorial Nullstellensatz (see~\cite{Bishnoi2023trifference, heger2021short} for alternative proofs using the results from~\cite{jamison1977covering, brouwer1978blocking}).

\begin{theorem}
\label{thm:strong_lower_bound}
 For any prime power $q$, every strong blocking set in $\mathrm{PG}(k - 1, q)$ has size at least  $(q+1)(k-1).$
\end{theorem}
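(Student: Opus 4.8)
The plan is to prove the bound by induction on $k$, peeling off one projective dimension at a time by projecting from a point of $\M$, with the base case $k=2$ supplying the value $q+1$. In $\PG(1,q)$ every hyperplane is a single point $H$, and $\langle \M\cap H\rangle=H$ forces $H\in\M$; hence a strong blocking set must consist of all $q+1$ points of the line, which matches $(q+1)(k-1)=q+1$. For the inductive step I want the projection of a strong blocking set to again be a strong blocking set, and to control how many points are lost.

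Before that, I would record a clean reformulation of the defining property. I claim that $\M$ is a strong blocking set if and only if for every codimension-$2$ flat $S$, each of the $q+1$ hyperplanes of the pencil through $S$ contains a point of $\M\setminus S$. Indeed, $\langle\M\cap H\rangle=H$ fails for a hyperplane $H$ exactly when $\M\cap H$ is contained in some hyperplane $S$ of $H$, that is, in a codimension-$2$ flat $S\subset H$; letting $(S,H)$ range over all incident pairs gives the stated condition. Since the sets $H_j\setminus S$ over the $q+1$ hyperplanes $H_j\supseteq S$ partition $\PG(k-1,q)\setminus S$, this characterization immediately yields the useful consequence that $|\M\setminus S|\ge q+1$ for \emph{every} codimension-$2$ flat $S$.

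For the inductive step I would fix a point $P\in\M$ and project from it, $\pi_P\colon\PG(k-1,q)\setminus\{P\}\to\PG(k-2,q)$. Because $\pi_P$ is induced by the quotient linear map $\Fq^k\to\Fq^k/\langle P\rangle$, it sends spanning sets to spanning sets, so for each hyperplane $H\ni P$ the image of $\M\cap H$ spans $\pi_P(H)$; as hyperplanes of $\PG(k-2,q)$ are exactly the images of hyperplanes through $P$, the set $\M'\coloneqq\pi_P(\M\setminus\{P\})$ is a strong blocking set in $\PG(k-2,q)$, whence $|\M'|\ge(q+1)(k-2)$ by induction. Writing $m_\ell=|(\M\setminus\{P\})\cap\ell|$ for the lines $\ell$ through $P$, one has $|\M|=1+\sum_\ell m_\ell$ and $|\M'|=\#\{\ell:m_\ell\ge1\}$, so that $|\M|-|\M'|=1+\sum_\ell(m_\ell-1)$. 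Thus the whole step reduces to exhibiting a point $P$ for which $\sum_\ell(m_\ell-1)\ge q$, i.e. a point from which the projection collapses at least $q+1$ points.

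This collapse estimate is the main obstacle, and it is where the exact constant is decided. Naive incidence counting (every hyperplane carries at least $k-1$ points of $\M$), and equally the affine approach of passing to a chart and invoking an affine blocking-set bound, only deliver roughly $qk$ points and miss the lower-order term $(k-1)$ — precisely the gap one must close to reach $(q+1)(k-1)$ rather than about $qk$. To recover the sharp constant I would prove the collapse inequality (equivalently, derive the bound directly) by the polynomial method, in the spirit of the Jamison and Brouwer--Schrijver theorems on affine blocking sets: using the pencil reformulation above, a failure of the bound corresponds to a codimension-$2$ flat $S$ one of whose $q+1$ pencil-hyperplanes avoids $\M\setminus S$, and I would encode the existence of such a hyperplane as the non-vanishing of a suitable product polynomial and then apply the Combinatorial Nullstellensatz, with the degree count across the $k-1$ independent coordinate directions producing the factor $q+1$. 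Pinning the constant at $q+1$ instead of the easily-obtained $q$ is the delicate point of the argument.
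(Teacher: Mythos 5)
Your base case, your pencil reformulation (including the useful consequence that $|\M\setminus S|\ge q+1$ for every codimension-$2$ flat $S$), and your observation that projecting from a point of $\M$ yields a strong blocking set in $\PG(k-2,q)$ are all correct. But the argument has a genuine gap exactly where you flag it: the collapse estimate $\sum_\ell(m_\ell-1)\ge q$ is never proved, and the closing appeal to the Combinatorial Nullstellensatz is not an argument --- you specify neither the polynomial, nor the grid, nor how the degree hypothesis would be verified, and the known Nullstellensatz proof of this bound (the one this paper cites from \cite{alfarano2022geometric}) is a direct global argument that does not pass through any such statement about a single projection point.

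Worse, the inductive framework buys you nothing. Take $k=3$: a strong blocking set in $\PG(2,q)$ is precisely a set meeting every line in at least two points, so each of the $q+1$ lines through any $P\in\M$ already contains a point of $\M\setminus\{P\}$; hence $|\M'|=q+1$ automatically and $\sum_\ell(m_\ell-1)=(|\M|-1)-(q+1)=|\M|-q-2$. Your required inequality is therefore verbatim the claim $|\M|\ge 2(q+1)$, i.e.\ the theorem for $k=3$. In other words, the first nontrivial instance of your key lemma \emph{is} the first nontrivial instance of the theorem, and in general the collapse estimate packages the entire difficulty of passing from dimension $k-1$ to dimension $k$; nothing has been reduced. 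It is also not self-evidently true for an arbitrary choice of $P$ in higher dimensions, so even its statement would need care. To complete the proof you would have to actually carry out one of the known arguments --- the Combinatorial Nullstellensatz proof of \cite{alfarano2022geometric}, or the reduction to the Jamison / Brouwer--Schrijver affine blocking set bounds as in \cite{Bishnoi2023trifference, heger2021short} --- rather than name them; as written, the delicate point you correctly identify (getting $q+1$ rather than $q$ per dimension) is left entirely unaddressed.
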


Recently this lower bound has been improved by using Delsarte's linear programming bound in coding theory, which is also known as the MRRW bound. 

\begin{theorem}[see   {\cite[Theorem 1.4]{Bishnoi2023trifference}, \cite[Theorem 3.3]{scotti2023lower}}]
    For any prime power $q$, there is a constant $c_q>1$ such that every strong blocking set in $\mathrm{PG}(k - 1, q)$ has size at least 
$(c_q-o(1))(q+1)(k-1)$.
\end{theorem}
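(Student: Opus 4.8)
The plan is to use the equivalence between strong blocking sets and minimal codes (the theorem stated just above) to restate the problem entirely in coding-theoretic terms: a strong blocking set of size $n$ in $\PG(k-1,q)$ is the same as a nondegenerate minimal $[n,k]_q$ code $\C$, so it suffices to prove that every such $\C$ has $n \ge (c_q-o(1))(q+1)(k-1)$. The whole strategy is then to convert the combinatorial consequences of minimality into constraints on the weight distribution of $\C$ and to feed these into Delsarte's linear programming (MRRW) bound, squeezing out a constant factor beyond the elementary first-moment count.

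First I would record the structural constraints that minimality (equivalently, the strong blocking property) imposes on the weight distribution $(A_0,\dots,A_n)$ of $\C$. Since every hyperplane $H$ meets $\M$ in a \emph{spanning} set of $H\cong\PG(k-2,q)$, we have $|\M\cap H|\ge k-1$, and hence $\wt(c)=n-|\M\cap H_c|\le n-(k-1)$ for every nonzero codeword $c$; that is, $A_w=0$ for all $w>n-k+1$, so all weights are forbidden from the top of the range. The genuinely \emph{strong} part of the hypothesis I would extract by examining two-dimensional subcodes: for independent $c_1,c_2\in\C$, the columns of the $2\times|\sigma(c_1)\cup\sigma(c_2)|$ matrix they determine are points of $\PG(1,q)$, and the strong blocking property is equivalent to the statement that these columns cover all $q+1$ points of $\PG(1,q)$, for every such pair. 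This pins down how supports of pairs of codewords must overlap and yields further linear inequalities relating the $A_w$ (and the finer "pair" distribution) beyond the crude bound $A_w=0$ for $w>n-k+1$.

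With these constraints in hand, I would invoke the MacWilliams identities, which express the dual weight distribution $(A_0^\perp,\dots,A_n^\perp)$ as a Krawtchouk transform of $(A_w)$, together with the Delsarte positivity conditions $A_w\ge 0$ and $A_w^\perp\ge 0$. This turns the question into a linear program: maximize $\sum_w A_w=q^k$ subject to $A_0=1$, the forbidden-weight and covering constraints above, and Delsarte positivity. By LP duality, every dual-feasible test function gives an upper bound on $q^k$, and choosing the Krawtchouk-based polynomial of the first MRRW bound — adapted to the fact that the high-weight range is forbidden — produces, after the standard passage to the asymptotic regime $n=\Theta(qk)$ and $k\to\infty$, an upper bound on $k/n$ strictly below the value $\tfrac{1}{q+1}$ that corresponds to the elementary bound. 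Reading off this gap yields a constant $c_q>1$, with the discrepancy absorbed into the $o(1)$.

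The main obstacle is this final asymptotic optimization. One must select the extremal Krawtchouk test function for the forbidden-weight linear program, solve the transcendental equation governing its relevant root, and verify that the resulting $c_q$ exceeds $1$ \emph{uniformly} in $q$. The delicate point is that a mere first- (or second-)moment average over hyperplanes only recovers a bound at the threshold $c_q=1$; the strict improvement genuinely requires the full strong-blocking covering constraints on the pair distribution to be incorporated into the LP, and the bulk of the work lies in showing that these constraints push the optimum strictly past the elementary value and in making the constant $c_q$ explicit.
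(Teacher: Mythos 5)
This statement is not proved in the paper at all: it is quoted from \cite[Theorem 1.4]{Bishnoi2023trifference} and \cite[Theorem 3.3]{scotti2023lower}, and the paper's only comment on its proof is the one-line attribution to Delsarte's linear programming (MRRW) bound. Your plan is consistent with that attribution at the level of strategy, and the two structural facts you extract are correct: the spanning condition does force $A_w=0$ for $w>n-k+1$, and minimality is indeed equivalent to every two-dimensional subcode having columns that cover all $q+1$ points of $\PG(1,q)$. So the inputs are right; the issue is what you do with them.

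The genuine gap is the step where the minimality constraints are ``fed into'' Delsarte's LP. Delsarte's linear program sees only the weight distribution $(A_w)$ together with Krawtchouk positivity; the pair-covering property is a constraint on \emph{pairs} of codewords (a finer ``two-point'' distribution), and such constraints do not translate into linear inequalities in the $A_w$ without further work --- handling them directly would require a Schrijver-type semidefinite strengthening, not the LP you describe. Meanwhile, the constraints you \emph{can} legitimately put into the LP ($A_0=1$, $A_w=0$ for $w>n-k+1$, nonnegativity of the MacWilliams transform) provably cannot give $c_q>1$: in the regime $n=\Theta(qk)$ the forbidden-weight window $w>n-k+1$ sits at relative weight $1-\Theta(1/q)$, where the anticode/diameter-type LP bound is vacuous, and as you yourself note the moment versions of this constraint only reach the threshold $c_q=1$ (the plain first moment in fact only gives $n\gtrsim q(k-1)$). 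So the entire content of the theorem lives in the step you defer. What the cited proofs actually do is first distill the pair-covering property into a clean scalar inequality on the weight spectrum of the code (a lower bound on the minimum distance, or a relation between the minimum and maximum weights of a minimal code), and only then invoke the standard LP/MRRW bound for codes with that guaranteed distance; without identifying and proving that intermediate lemma, and then carrying out the asymptotic optimization to confirm the resulting constant exceeds $1$, the proposal is a research programme rather than a proof.
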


We also have the following existence result shown using the probabilistic method that provides the best-known upper bounds.

\begin{theorem}[see \cite{miklos1984linear} for $q=2$ and \cite{Bishnoi2023trifference, ABN2023} for $q>2$]\label{thm:upper_bound}
 The size of the smallest strong blocking set in $\PG(k-1,q)$ is at most
  $$  \begin{cases}\frac{2k-1}{\log_2(4/3)} & \mbox{ if } q=2, \\
  (q + 1)\frac{2k}{\log_q(\frac{q^4}{q^3-q+1})} & \mbox{ otherwise. }
  \end{cases}$$
\end{theorem}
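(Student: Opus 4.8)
The plan is to argue by the probabilistic method, so I first rephrase the defining property as a covering condition. For a hyperplane $H$, the span $\langle \M \cap H\rangle$ fails to equal $H$ exactly when $\M \cap H$ lies in some codimension-$2$ subspace $\Pi \subset H$, that is, when $\M$ avoids the \emph{cell} $H \setminus \Pi$. Hence $\M$ is a strong blocking set if and only if it meets every cell $H\setminus\Pi$, as $H$ runs over hyperplanes and $\Pi$ over the codimension-$2$ subspaces contained in $H$. Each cell contains exactly $q^{k-2}$ points, and the number of pairs $(\Pi,H)$ is
\[
\binom{k}{1}_q\binom{k-1}{1}_q=\frac{(q^k-1)(q^{k-1}-1)}{(q-1)^2}<q^{2k},
\]
with the sharper bound $(2^k-1)(2^{k-1}-1)<2^{2k-1}$ when $q=2$. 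The task is thus to hit all of these cells with as few points as possible.

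For $q>2$ I would take $\M$ to be the union of $t$ lines chosen independently and uniformly at random, so that $|\M|\le (q+1)t$. The crucial quantity is the probability $p$ that one random line $\ell$ misses a fixed cell, i.e.\ that $\ell\cap H\subseteq \Pi$. Separating the cases $\ell\subseteq H$ and $\ell\cap H$ a single point, the favourable lines number $\binom{k-2}{2}_q+\frac{q^{k-2}-1}{q-1}\,q^{k-2}$ out of $\binom{k}{2}_q$ in total; evaluating this ratio shows $p\to \frac{q^3-q+1}{q^4}$, and in fact $p<\frac{q^3-q+1}{q^4}$ for every $k$. By independence, the probability that a fixed cell is avoided by all $t$ lines is $p^{\,t}$.

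A union bound over the cells now yields
\[
\Pr[\M\text{ is not strong blocking}]\le \binom{k}{1}_q\binom{k-1}{1}_q\,p^{\,t}<q^{2k}\Big(\tfrac{q^3-q+1}{q^4}\Big)^{t},
\]
which is below $1$ as soon as $t>\frac{2k}{\log_q\!\big(q^4/(q^3-q+1)\big)}$. For such $t$ some choice of lines gives a strong blocking set of size at most $(q+1)t$, the claimed bound. For $q=2$ a line carries only three points, so instead (following Mikl\'os) I would take $\M$ to be a set of $t$ random points; the probability of missing a fixed cell is $1-\frac{q^{k-2}(q-1)}{q^k-1}$, which for $q=2$ is less than $\frac34$, and the same union bound with the count $2^{2k-1}$ closes once $t>\frac{2k-1}{\log_2(4/3)}$, giving $|\M|\le t$ within the stated bound.

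The main obstacle is the exact analysis of the per-line (resp.\ per-point) miss probability: the asymptotic values $\frac{q^3-q+1}{q^4}$ and $\frac34$ are immediate, but to obtain the clean constants one must check that the finite-$k$ corrections have the right sign, so that $p<\frac{q^3-q+1}{q^4}$ (and the point analogue $<\frac34$) holds for all $k$, and that the Gaussian-binomial cell count stays below $q^{2k}$ (resp.\ $2^{2k-1}$). Granting these inequalities, the remainder is a routine first-moment argument.
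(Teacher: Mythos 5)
The paper does not actually prove Theorem~\ref{thm:upper_bound}; it is quoted from \cite{miklos1984linear} and \cite{Bishnoi2023trifference, ABN2023}, where it is established by exactly the first-moment argument you describe --- random points for $q=2$, random lines for $q>2$, and a union bound over the pairs $(\Pi,H)$ of a hyperplane together with a codimension-$2$ subspace inside it. Your outline is correct, and the finite-$k$ inequalities you flag do hold: one checks $p<(q^3-q+1)/q^4$ for all $k\ge 3$ by clearing denominators, and the strictness of the bounds on the cell count and on $p$ leaves enough slack to absorb the rounding of $t$ to an integer, so the stated constants come out as claimed.
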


This is an existence result that does not provide any explicit constructions. We now recall some of the most relevant general explicit constructions of small strong blocking sets that are known in the literature. 
\medskip

\noindent \textbf{Rational normal tangents:} Assume that $q\geq 2k-3$ and that $\mathrm{char}(\Fq)>k$. Fancsali and Sziklai~\cite{fancsali2014lines} showed that under these hypothesis, one can take any distinct $2k-3$ points on a rational normal curve, and then take the union of the tangent lines to this curve at those points. The resulting set is a strong blocking set of size $(2k-3)(q+1)$. In the same paper, they also showed how to get rid of the hypothesis on the characteristic of the field, by using what they call the \emph{diverted tangents} method. However, the hypothesis on the field size must be kept, implying that such a construction provides only finitely many strong blocking sets for a given field size. 

\medskip

\noindent \textbf{Tetrahedron:} This construction is probably the most natural one. It is obtained by selecting any $k$ points in $\PG(k-1,q)$  in general position, and then taking the union of the lines spanned by every pair of these points. It works over every field, but its size  $\binom{k}{2}(q-1)+k$ is quadratic in $k$, while we know by Theorem~\ref{thm:upper_bound} about the existence of strong blocking sets whose size is linear in $k$. The tetrahedron was first observed by Davydov, Giulietti, Marcugini and Pambianco~\cite{davydov2011linear} and then rediscovered by several authors.

\medskip

\noindent \textbf{Line subspreads:} This is a slight improvement on the size of the tetrahedron. It works whenever $k=2t$ is even, and it consists of carefully choosing $t^2$ points in $\PG(t-1,q^2)$, and then using the field reduction map to obtain $t^2$ lines in $\PG(k-1,q)$ whose union is a strong blocking set. This construction has size $\frac{k^2}{4}(q+1)$ and was recently pointed out in \cite{alfarano2022three}. 

\medskip
All these constructions are obtained as unions of lines in the projective space.
This is mainly due to the fact that with such a structure it is easy to control their intersections with subspaces. 
In particular, the main feature that these constructions possess is the following property, which is stronger 
than being a strong blocking set.

\begin{definition}
A set 
$\mathcal L$ of lines in a projective space satisfies the \textbf{avoidance property} if there is no codimension-$2$ space meeting every line $\ell \in \mathcal{L}$. 
\end{definition}

The relation between these sets of lines and strong blocking sets is the following observation of Fancsali and Sziklai \cite[Theorem 11]{fancsali2014lines}, whose proof we include for the sake of convenience. 

\begin{theorem}\label{thm:avoidance}
 If a set $\mathcal{L}$ of lines satisfies the avoidance property, then the point-set $\B = \cup_{\ell \in \mathcal{L}} \ell$ is a strong blocking set.
\end{theorem}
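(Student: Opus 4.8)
The plan is to argue by contraposition: assuming $\B$ is \emph{not} a strong blocking set, I will exhibit a codimension-$2$ subspace meeting every line of $\mathcal{L}$, which violates the avoidance property. So suppose there is a hyperplane $H$ of $\PG(k-1,q)$ with $\langle H \cap \B \rangle \subsetneq H$. Since this span is a proper projective subspace of $H$, it is contained in some hyperplane of $H$; call that subspace $S$. As a hyperplane of the hyperplane $H$, the subspace $S$ has codimension $2$ in $\PG(k-1,q)$, and by construction $H \cap \B \subseteq \langle H \cap \B \rangle \subseteq S$.

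The key step is the elementary observation that every line meets $H$. Indeed, for a line $\ell$ (projective dimension $1$) and the hyperplane $H$ (projective dimension $k-2$), the Grassmann dimension formula gives $\dim(\ell \cap H) \ge \dim(\ell) + \dim(H) - (k-1) = 0$, so $\ell \cap H$ is nonempty for every line $\ell$, regardless of whether $\ell$ lies in $H$ or crosses it in a single point.

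Now fix any line $\ell \in \mathcal{L}$. Because $\ell \subseteq \B$, we have the chain of inclusions $\ell \cap H \subseteq \B \cap H \subseteq S$. Combined with $\ell \cap H \neq \emptyset$ from the previous step, this forces $\ell \cap S \supseteq \ell \cap H \neq \emptyset$. Hence the codimension-$2$ subspace $S$ meets every line of $\mathcal{L}$, contradicting the avoidance property, and the theorem follows.

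I do not anticipate a serious obstacle here: the argument is essentially the dimension count together with the inclusions $\ell \cap H \subseteq \B \cap H \subseteq S$. The only point requiring a little care is choosing $S$ correctly — it must be a hyperplane \emph{within} $H$, i.e.\ exactly codimension $2$ in the ambient space, so that the avoidance property is applicable — but this is automatic, since any proper projective subspace of $H$ is contained in some hyperplane of $H$.
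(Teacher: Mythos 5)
Your proof is correct and is essentially identical to the paper's own argument: both proceed by contraposition, enclose $H\cap\B$ in a hyperplane of $H$ (a codimension-$2$ subspace of the ambient space), and use the fact that every line meets a hyperplane to contradict the avoidance property. Only the notation differs.
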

\begin{proof}
    Let $\mathcal{L}$ be a set of lines and let $\B = \cup_{\ell \in \mathcal{L}} \ell$. 
    Assume that $\B$ is not a strong blocking set. 
    Then there exists a hyperplane $\Pi$ such that $\B \cap \Pi$ does not span $\Pi$. 
    In particular, $\B \cap \Pi$ is contained in a hyperplane $H$ of $\Pi$.
    
    Since $\Pi$ is a hyperplane, it meets every line of the projective space. Thus, $\ell \cap \Pi \neq \emptyset$ for all $\ell \in \mathcal{L}$, but since $\ell \subseteq \B$ and $\B \cap \Pi \subseteq H$, it follows that $\ell \cap H \neq \emptyset$. That is, $H$ is a codimension-$2$ subspace meeting every line of $\mathcal{L}$, and so $\mathcal{L}$ does not satisfy the avoidance property.
\end{proof}

\begin{remark}
As shown in~\cite[Lemma 13]{fancsali2014lines}, any collection of lines that satisfy the avoidance property must have size at least $k - 1 + \lfloor (k - 1)/2 \rfloor$, thus giving a lower bound of roughly $1.5(q + 1)(k - 1)$ on the smallest possible size of a strong blocking set that can be constructed using such a set of lines. 
\end{remark}

\subsection{Expander graphs}
In our construction, we will make use of explicit constructions of constant-degree expander graphs. Informally, the edges of expander graphs are very well spread out, ensuring that there are many outgoing edges from all vertex subsets that are not too large. We refer the reader to the survey~\cite{hlw2006} for a formal definition and for various applications of expanders. 

Expansion in graphs can be measured by their spectral properties.
Given an $n$-vertex graph $G$, we denote the eigenvalues of its adjacency matrix by $\lambda_1 \geq \lambda_2 \geq \cdots \geq \lambda_n$. These eigenvalues encode a lot of information about the graph; for instance, if $G$ is connected and $d$-regular, then $\lambda_1 = d$ and $\lambda_2 < d$. A graph $G$ is called an $(n, d, \lambda)$-graph if it is a $d$-regular graph on $n$ vertices with $|\lambda_i| \leq \lambda$ for all $i \ge 2$. 
The following lemma is one of the central tools for studying such graphs. 

\begin{lemma}[Expander-Mixing Lemma]
\label{lem:eml}
Let $G$ be an $(n, d, \lambda)$-graph and $S, T$ be two vertex-subsets of $G$. 
Denote by $e(S, T)$ the number of pairs $(x, y) \in S \times T$ such that $xy$ is an edge of $G$. 
Then 
\[\left| e(S, T) - \frac{d |S| |T|}{n} \right| \leq \lambda \sqrt{|S||T| \left(1 - \frac{|S|}{n}\right) \left( 1- \frac{|T|}{n}\right)}.\]
\end{lemma}

A proof of this lemma can be found in~\cite[Lemma~4.15]{vadhan2012pseudorandomness}. 
Note that the error term on the right-hand side is directly proportional to $\lambda$, and so it is natural to try to make this parameter as small as possible. The Alon--Bopanna bound~\cite{nilli1991second} limits how far one can go, and motivates the definition of Ramanujan graphs, which are the ultimate expanders.

\begin{theorem}[Alon-Bopanna] \label{thm:alonbopanna}
    Let $G$ be an $(n, d, \lambda)$-graph. 
    Then $\lambda \geq 2 \sqrt{d - 1} - o(1)$ as $n \rightarrow \infty$. 
\end{theorem}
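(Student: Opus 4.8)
The plan is to bound the spectrum from below through its even moments, exploiting the fact that a connected $d$-regular graph locally looks like the infinite $d$-regular tree $T_d$, whose spectral radius is exactly $2\sqrt{d-1}$. First I would dispose of the disconnected case: if $G$ is disconnected then $d$ is an eigenvalue of multiplicity at least two, so $\lambda = d \geq 2\sqrt{d-1}$ (as $(d-2)^2 \geq 0$) and we are done. Hence assume $G$ is connected, so that $\lambda_1 = d$ is simple and $T_d$ is the universal cover of $G$.

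The main identity is $\mathrm{Tr}(A^{2k}) = \sum_{i=1}^n \lambda_i^{2k}$, where $A$ is the adjacency matrix, which on the one hand counts closed walks of length $2k$ in $G$, and on the other hand is at most $d^{2k} + (n-1)\lambda^{2k}$. For the lower bound I would fix a vertex $v$ and a lift $\tilde v$ of $v$ in $T_d$: by unique path lifting, the covering projection sends the closed walks of length $2k$ based at $\tilde v$ injectively into the closed walks of length $2k$ based at $v$ in $G$. Summing over $v$ gives $\mathrm{Tr}(A^{2k}) \geq n\, W_{2k}$, where $W_{2k}$ denotes the number of closed walks of length $2k$ from the root of $T_d$ --- crucially with no girth hypothesis on $G$. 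A standard Catalan-number count (each up/down excursion of semilength $k$ admits at least $d-1$ choices per up-step) yields $W_{2k} \geq \frac{1}{k+1}\binom{2k}{k}(d-1)^k$, so that $W_{2k}^{1/2k} \to 2\sqrt{d-1}$ as $k \to \infty$.

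Combining the two bounds gives $(n-1)\lambda^{2k} \geq n\, W_{2k} - d^{2k}$. The last step is to balance $k$ against $n$: since $d^2/(4(d-1)) \geq 1$, choosing $k = k(n) \to \infty$ slowly enough that $d^{2k} = o(n\, W_{2k})$ (for instance $k$ of order $\log n$ with a small constant) makes the $\lambda_1$-term negligible, and taking $2k$-th roots leaves $\lambda \geq (1-o(1))\, W_{2k}^{1/2k} \to 2\sqrt{d-1}$. The main obstacle is the girth-free lower bound $\mathrm{Tr}(A^{2k}) \geq n\, W_{2k}$: the naive walk-counting comparison only works when $G$ has no short cycles, and the point is to replace it by the covering-space argument so the estimate holds for \emph{every} $d$-regular graph. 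A secondary technical point is the order of limits --- one must let $k$ grow with $n$ at just the right rate so that both $\left(\frac{1}{k+1}\binom{2k}{k}\right)^{1/2k} \to 1$ and the $d^{2k}$ correction vanishes. An alternative route is the Rayleigh-quotient method of Nilli: build a test function decaying like $(d-1)^{-\mathrm{dist}/2}$ on two far-apart balls and combine them orthogonally to the all-ones vector; there the obstacle shifts to controlling the boundary contribution of the truncated balls and to guaranteeing two sufficiently separated balls whose spheres keep growing.
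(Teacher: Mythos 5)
Your argument is correct, but note that the paper does not prove this theorem at all: it is stated as a known result and attributed to Nilli's paper \cite{nilli1991second}, whose proof is precisely the Rayleigh-quotient/test-function argument you sketch as your ``alternative route.'' What you write out in full is instead the trace (moment) method, and it holds up: the disconnected case is correctly dispatched since $d-2\sqrt{d-1}=(\sqrt{d-1}-1)^2\ge 0$; the inequality $\mathrm{Tr}(A^{2k})\le d^{2k}+(n-1)\lambda^{2k}$ follows from the definition of an $(n,d,\lambda)$-graph; the pointwise bound $(A^{2k})_{vv}\ge W_{2k}$ via unique path lifting from the universal cover $T_d$ is the right way to avoid any girth assumption; the Catalan-type count $W_{2k}\ge \frac{1}{k+1}\binom{2k}{k}(d-1)^k$ is valid (up-steps in the tree have at least $d-1$ choices, down-steps exactly one, and the Dyck profile is recoverable from the walk); and taking $k\to\infty$ with $n$ slowly enough that $d^{2k}=o(nW_{2k})$ — possible since $nW_{2k}\gtrsim n\,4^k(d-1)^k/k^{3/2}$ while $d^{2k}=(d^2/(4(d-1)))^k(4(d-1))^k$ — gives $\lambda\ge(1-o(1))W_{2k}^{1/2k}\to 2\sqrt{d-1}$. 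Comparing the two routes: the trace method is softer and needs no construction of a test vector or of two distant vertices, while Nilli's argument yields an explicit quantitative error term of order $O(1/\mathrm{diam}^2)$ rather than an unspecified $o(1)$; for the purposes of this paper either suffices.
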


\begin{definition}
Let $G$ be a $d$-regular graph with the eigenvalues $d=\lambda_1 \geq \lambda_2 \geq \ldots \geq \lambda_n$. If 
$\max\{ |\lambda_i| \,:\, |\lambda_i|<d\}\leq 2\sqrt{d-1}$, then  $G$ is said to be a \textbf{Ramanujan graph}.
\end{definition}

Lubotsky, Phillips and Sarnak~\cite{lubotzky1988ramanujan} and Margulis~\cite{margulis1988} gave explicit constructions of $d$-regular Ramanujan graphs for $d = p + 1$, where $p$ is prime. We denote by $H_d$ the $d$-regular Ramanujan graph constructed by Lubotsky, Phillips and Sarnak. For the convenience of the reader, we briefly describe this construction. 

Fix a prime $p\equiv 1 \pmod 4$. By Jacobi's four square theorem, there exist exactly $p+1$ integer solutions to the equation 
\begin{equation}\label{eq:Jacobi}p=b_1^2+b_2^2+b_3^2+b_4^2, \qquad b_1>0, b_2,b_3,b_4 \equiv 0 \pmod 2\end{equation}
Now, let $r\equiv 1 \pmod 4$ be a distinct prime. To each solution of \eqref{eq:Jacobi} we associate the matrix
\begin{equation}\label{eq:p+1}\begin{pmatrix}
b_1+ib_2 & b_3 + ib_4 \\
-b_3+ib_4 & b_1-ib_2
\end{pmatrix} \in \F_r^{2\times 2},\end{equation}
where $i$ is a square root of $-1$ in $\F_r$.
If $p$ is a quadratic residue modulo $r$, we define $H_{p+1}$ to be the Cayley graph of $\mathrm{PGL}(2,\F_r)$ with the $p+1$ generators given in~\eqref{eq:p+1}, which has $r(r^2-1)$ vertices. If $p$ is not a quadratic residue modulo $r$, then we define $H_{p+1}$ to be the Cayley graph of $\mathrm{PSL}(2,\F_r)$ with the $p+1$ generators given in~\eqref{eq:p+1}, which has $\frac{r(r^2-1)}{2}$ vertices.

It was shown in~\cite{lubotzky1988ramanujan} that the graphs $H_{p+1}$ are Ramanujan graphs, and hence $\lambda_2(H_{p+1})\leq 2\sqrt{p}$. A few years later, this construction was adapted in~\cite{morgenstern1994existence} to produce $(q + 1)$-regular graphs for prime powers $q$. However, both of these constructions have the disadvantage that they only produce $(n,d,\lambda)$-graphs with very restricted choices of $n$ and $d$. If we slightly relax the requirement that $\lambda \leq 2\sqrt{d - 1}$, we can find explicit constructions for every degree $d$ and every large enough $n$. 

\begin{theorem}[{see \cite[Theorem 1.3]{alon2021explicit}}]\label{thm:epsilon_Ram}
    For every positive integer $d$, and every $\varepsilon > 0$, there is an $n_0(d, \varepsilon)$ such that, for all $n \geq n_0(d, \varepsilon)$ with $nd$ even, there is an explicit construction of an $(n, d,\lambda)$-graph $G_{n,d}^\varepsilon$ with $\lambda \le 2\sqrt{d-1}+\varepsilon$
\end{theorem}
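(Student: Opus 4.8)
The plan is to treat the two kinds of flexibility demanded by the statement---arbitrary degree $d$ and (essentially) arbitrary order $n$---as separate tasks, using the slack of $+\varepsilon$ in the spectral bound to absorb the error introduced at each step. The starting observation is that genuinely Ramanujan graphs, meeting $\lambda \le 2\sqrt{d-1}$ exactly, are already available explicitly from the Lubotzky--Phillips--Sarnak and Morgenstern constructions \cite{lubotzky1988ramanujan, morgenstern1994existence}, but only for degrees of the form $d = q+1$ with $q$ a prime power and only for a sparse sequence of orders. Thus the whole content is to promote these rigid families to one that is flexible in both parameters while conceding only $\varepsilon$ in the second eigenvalue---a loss that the Alon--Boppana bound (Theorem~\ref{thm:alonbopanna}) shows is in any case unavoidable.

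To obtain every degree $d$ with the sharp leading constant, I would pass through random lifts. Fix a small $d$-regular base graph (any $d$ is allowed, since any $d$-regular multigraph on few vertices works as a base), and consider its random $t$-lifts. By the now-standard trace-method analysis of Bordenave, building on Friedman and on Bilu--Linial, the new eigenvalues of such a lift are at most $2\sqrt{d-1}+o(1)$ with high probability as $t \to \infty$. The key point is that this probabilistic statement can be \emph{derandomized}: the random signs or permutations defining the lift may be supplied by the output of an explicit small-bias or pseudorandom generator, so that a single lift meeting the bound can be exhibited explicitly. Ranging over $t$ then produces, for each fixed $d$ and $\varepsilon$, an explicit family of $d$-regular graphs with $\lambda \le 2\sqrt{d-1}+\varepsilon$ whose orders form an arithmetic progression $n_0, 2n_0, 3n_0, \dots$; in particular, consecutive available orders $N_1 < N_2 < \cdots$ satisfy $N_{i+1}-N_i = o(N_i)$.

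It remains to realize \emph{every} sufficiently large $n$ subject only to the evident parity constraint that $nd$ be even. Given such an $n$, I would select the largest available order $N \le n$ from the dense set above, so that the deficit $s = n - N$ is small compared with $N$. One then assembles the graph on $n$ vertices from the near-Ramanujan graph $\Gamma$ on $N$ vertices together with a small near-Ramanujan gadget accounting for the extra $s$ vertices, glued by a controlled local surgery: delete a sparse set of edges from $\Gamma$ to free up half-edges and reconnect them through the new vertices, preserving $d$-regularity and connectivity. The eigenvalue analysis here is the delicate point and would be the main obstacle of the whole argument. Deleting $\Theta(s)$ edges is a bounded-rank perturbation and so, by Cauchy interlacing, controls all but $O(s)$ of the eigenvalues; but interlacing alone does \emph{not} prevent the second eigenvalue from jumping up toward $d$ when edges are merely removed. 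One must therefore argue directly that the reconnected graph has no large eigenvalue apart from $\lambda_1 = d$---for instance by a quadratic-form argument showing that any candidate second eigenvector must be nearly constant on the large piece $\Gamma$ and is hence suppressed by the expansion of $\Gamma$---so that the glued graph inherits $\lambda \le 2\sqrt{d-1}+\varepsilon$ once the surgery error is absorbed into $\varepsilon$. Keeping every quantity in this interpolation explicit, while holding the sharp constant $2\sqrt{d-1}$ throughout, is exactly what makes the theorem nontrivial.
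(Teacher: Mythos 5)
First, note that the paper does not prove this statement at all: Theorem~\ref{thm:epsilon_Ram} is imported as a black box from \cite{alon2021explicit}, so the only ``in-paper proof'' is the citation. Your roadmap is also genuinely different from the one in that source: Alon works directly with the Lubotzky--Phillips--Sarnak graphs and reaches arbitrary degrees and sizes by elementary local operations on them, whereas you propose the derandomized-random-lift route, which is essentially the approach of Mohanty, O'Donnell and Paredes (``Explicit near-Ramanujan graphs of every degree''). The strategy is legitimate in outline, but as written it has two genuine gaps, each of which is the main content of an entire paper.

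The first gap is the sentence asserting that Bordenave's high-probability bound for random lifts ``can be derandomized'' by drawing the signs or permutations from an explicit pseudorandom generator. Bordenave's argument controls high moments of centered non-backtracking walk operators together with a tangle-freeness property, and showing that an explicit short-seed distribution fools all of these statistics is a substantial theorem, not an observation; it cannot be invoked in one line inside a proof of the very result it is used to establish. The second gap you flag yourself: after locating $n$ between two available orders, the gluing of a bounded-size gadget is exactly the step where neither Weyl's inequality (the perturbation has spectral norm $\Theta(1)$, not $o(1)$) nor Cauchy interlacing (which, as you correctly note, still permits $\lambda_2$ to jump towards $d$ after a rank-$O(1)$ modification) yields the required $+\varepsilon$ control. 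You write that ``one must therefore argue directly'' that no large new eigenvalue appears, but you do not supply that argument, and it is precisely the technical heart of \cite{alon2021explicit}. A proposal that names its main obstacle and then defers it is a plan, not a proof. (A minor additional imprecision: the Alon--Boppana bound shows one cannot beat $2\sqrt{d-1}-o(1)$; it does not show that the $+\varepsilon$ slack is unavoidable, since genuinely Ramanujan graphs do exist for some parameter families.)
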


\section{Integrity of a graph}
\label{sec:integrity}
Crucial to our work is the following graph parameter, known as the (vertex) \textit{integrity} of a graph, which was originally introduced in late 1980s as a measure of the robustness of a network under vertex deletion~\cite{bagga1992survey, barefoot1987vulnerability}.

\begin{definition}
Let $G = (V, E)$ be a simple connected graph. 
For any subgraph $H$, let $\kappa(G)$ denote the largest size of a connected component in $H$. The \textbf{integrity} of $G$ is the integer
$$\iota(G):=\min_{S\subseteq V} \left(|S|+\kappa(G - S)\right).$$
\end{definition}

It is a challenging problem to determine the integrity of graphs precisely, or even asymptotically (see~\cite{bagga1992survey} for an old survey and~\cite{balogh2018two, benko2009asymptotic} for some recent bounds on different families of graphs). 
We prove a new lower bound on the vertex integrity of $(n, d, \lambda)$-graphs. 
First, we introduce another graph parameter and show that it is closely related to the integrity of a graph.

\begin{definition}\label{def:biindependentsets}
    For a graph $G$, let $z(G)$ denote the largest integer $z$ such that there are two disjoint sets of vertices in $G$, each of size $z$, with no edge between them.
\end{definition}

\begin{proposition}\label{prop:z(G)}
    For every graph $G = (V, E)$ on $n$ vertices, 
    \[n - 2 z(G) \leq \iota(G) \leq n - z(G).\]
\end{proposition}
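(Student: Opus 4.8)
The plan is to prove the two inequalities separately, treating the upper bound $\iota(G) \le n - z(G)$ and the lower bound $\iota(G) \ge n - 2z(G)$ as independent tasks. Both are established by exhibiting explicit configurations: for the upper bound a good separating set $S$, and for the lower bound a good pair of disjoint vertex sets with no edge between them. The two directions are genuinely different in difficulty — the upper bound is a one-line construction, while the lower bound requires a balancing argument.

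For the upper bound I would start from a pair of disjoint vertex sets $A, B$, each of size $z := z(G)$, with no edge between them, which exist by the definition of $z(G)$. Deleting $S := V \setminus (A \cup B)$, of size $n - 2z$, leaves only $A \cup B$; since there is no edge from $A$ to $B$, every connected component of $G - S$ lies entirely inside $A$ or inside $B$, so $\kappa(G - S) \le \max(|A|,|B|) = z$. Substituting into the definition of integrity gives $\iota(G) \le |S| + \kappa(G-S) \le (n - 2z) + z = n - z$, as wanted.

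For the lower bound I would instead work from an optimal separator, i.e. a set $S^*$ with $\iota(G) = |S^*| + \kappa(G - S^*)$, and reverse-engineer two large non-adjacent sets. Let $C_1, \dots, C_m$ be the components of $G - S^*$ with sizes $c_1 \ge \dots \ge c_m$, so that $c_1 = \kappa(G - S^*)$ and $\sum_i c_i = n - |S^*|$. Since distinct components have no edges between them, any way of partitioning the components into two groups yields two sets with no edge between; after trimming the larger group down to the size of the smaller one (which only removes edges) we get $z(G) \ge \min(|A|,|B|)$ for the resulting groups $A,B$. It therefore suffices to split the components into two groups as balanced as possible: if the smaller group has size at least $\tfrac12(n - |S^*| - c_1)$, then $z(G) \ge \tfrac12(n - |S^*| - c_1) = \tfrac12(n - \iota(G))$, which rearranges to exactly $\iota(G) \ge n - 2z(G)$.

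The crux — and the step I expect to be the main obstacle — is producing a split of the components whose discrepancy is at most $c_1$. The naive idea of setting the largest component $C_1$ aside and balancing only $C_2,\dots,C_m$ loses an additive $c_1$ and yields only $\iota(G) \ge n - 2z(G) - c_1$, which is too weak. The fix is to include $C_1$ in the balancing and process the components by a greedy load-balancing rule: place $C_1$ first, then insert the remaining components in decreasing order of size, each into the currently smaller group. A short induction shows the difference between the two group sizes never exceeds $c_1$: initially it is exactly $c_1$, and adding an item of size $s \le c_1$ to the lighter group changes the difference from $D$ to $|D - s| \le c_1$ (since $D, s \in [0, c_1]$). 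Hence the smaller group has size at least $\tfrac12(n - |S^*| - c_1)$, as required, and multiplying the resulting bound on $z(G)$ by $2$ avoids any rounding loss. The only remaining care is with the degenerate cases $m \le 1$, where $n - \iota(G) \le 0$ and the bound holds trivially.
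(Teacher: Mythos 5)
Your proof is correct and follows essentially the same approach as the paper: the upper bound is identical, and for the lower bound both arguments take an optimal separator and split the components of $G - S^*$ into two groups with no edges between them, each of size at least $\tfrac12\left(n - \iota(G)\right)$. The only difference is in how the balanced split is obtained — you use a direct greedy load-balancing argument with discrepancy at most $c_1$, whereas the paper argues by contradiction via a largest suffix index $s$ with $c_s + \cdots + c_t \ge z+1$ — but this is a cosmetic variation, and your version is, if anything, slightly cleaner since it avoids the separate case $c_1 \ge z+1$.
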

\begin{proof}
    For the upper bound, let $A, B$ be two disjoint sets of size $z$ with no edges between them. 
    Put $S = V - (A \cup B)$. 
    Then any connected component in $G - S$ is either contained in $A$ or in $B$, and thus has size at most $z$. 
    Therefore $\iota(G) \leq |S| + z = (n - 2z) + z = n - z$. 

    We now prove the lower bound. 
    Let $z = z(G)$ and let $S$ be a subset of size $\sigma$ such that the maximum size of a connected component in $G - S$ is $\kappa$, with $\sigma + \kappa = \iota(G)$. 
Let $C_1, \dots, C_t$ be the connected components in $G - S$ of sizes $\kappa = c_1 \geq  \cdots \geq c_t$. 
Note that $n - \iota(G) = \sum_{i = 2}^t c_i$, and thus it suffices to upper bound this sum by $2z$. 
Also note that there are no edges between $C_i$ and $C_j$ for any $i \neq j$. 
If $c_1 \geq z + 1$, then by the maximality of $z$ the size of $C_2 \cup \cdots \cup C_t$ is at most $z$, and we are done.
Therefore we have $c_1 \leq z$ and, for the sake of contradiction, we assume that $\sum_{i = 2}^t c_i \geq 2z + 1$. 
Let $2 \le s \le t$ be the largest index $s$ for which $c_s + \cdots + c_t \ge z + 1$. 
Since $c_s \leq c_1$, it follows that 
$c_s + \cdots + c_t \leq z + c_1$. 
Therefore, $c_2 + \cdots + c_{s-1} \geq 2z + 1 - (z + c_1) = z + 1 - c_1$. 
Let $X = C_1 \cup \cdots \cup C_{s-1}$ and $Y = C_s \cup \cdots \cup C_t$. 
Then both $X$ and $Y$ have size at least $z + 1$, which is a contradiction since they do not have any edges between them.     
\end{proof}

\begin{corollary}\label{cor:alon}
For any $(n, d, \lambda)$-graph $G$, we have 
$\iota(G) \geq \left( \frac{d - \lambda}{d + \lambda} \right)n.$
\end{corollary}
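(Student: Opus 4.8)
The plan is to lower-bound $\iota(G)$ via the inequality $\iota(G) \geq n - 2z(G)$ established in Proposition~\ref{prop:z(G)}, and then to obtain an upper bound on $z(G)$ from the Expander-Mixing Lemma (Lemma~\ref{lem:eml}). By the definition of $z(G)$, there exist two disjoint vertex sets $A, B$, each of size $z = z(G)$, with no edge between them, so that $e(A, B) = 0$. I would feed $|A| = |B| = z$ and $e(A,B) = 0$ into Lemma~\ref{lem:eml}, using only the lower-tail direction $e(A,B) \geq \frac{d|A||B|}{n} - \lambda\sqrt{|A||B|(1 - |A|/n)(1 - |B|/n)}$ of the absolute-value bound. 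Since $0 \le z \le n$, the square root simplifies to $z(1 - z/n)$, and the resulting inequality reads
\[
\frac{d z^2}{n} \leq \lambda z \left(1 - \frac{z}{n}\right).
\]

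Next I would dispose of the trivial case $z = 0$ (for which the bound on $z$ holds vacuously) and otherwise divide through by $z > 0$ to obtain $\frac{dz}{n} \leq \lambda\left(1 - \frac{z}{n}\right)$. Rearranging gives $(d + \lambda)\frac{z}{n} \leq \lambda$, and hence
\[
z(G) \leq \frac{\lambda n}{d + \lambda}.
\]
Substituting this into $\iota(G) \geq n - 2z(G)$ yields $\iota(G) \geq n - \frac{2\lambda n}{d + \lambda} = \frac{(d - \lambda)n}{d + \lambda}$, which is exactly the claimed bound.

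I do not expect a genuine obstacle here, as the argument is a direct combination of the two preceding results. The only points requiring care are the simplification of the square root term in the Expander-Mixing Lemma and keeping track of the correct direction of the inequality, namely that setting $e(A,B)=0$ forces $\frac{d z^2}{n} \leq \lambda\sqrt{\cdots}$ rather than the reverse. Everything else is routine algebraic manipulation.
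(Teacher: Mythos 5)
Your proposal is correct and follows exactly the same route as the paper: bound $z(G) \leq \frac{\lambda n}{d+\lambda}$ via the Expander-Mixing Lemma applied to two disjoint edge-free sets of size $z(G)$, then plug into the lower bound $\iota(G) \geq n - 2z(G)$ from Proposition~\ref{prop:z(G)}. The only difference is that you spell out the algebraic simplification of the square-root term, which the paper leaves implicit.
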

\begin{proof}
 Let $z(G)$ be as in Definition~\ref{def:biindependentsets}. For any two sets $S, T$ of vertices with $e(S, T) = 0$ and $|S| = |T| = z(G)$, Lemma~\ref{lem:eml} implies that 
\[z(G) \leq \frac{\lambda n}{d + \lambda}.\] 
Applying the lower bound $\iota(G) \geq n - 2z(G)$ from Proposition~\ref{prop:z(G)} gives $\iota(G) \geq n - 2\frac{\lambda}{d + \lambda} n = \frac{d - \lambda}{d + \lambda} n$.
\end{proof}

\begin{remark}
    A lower bound on the integrity of cubic graphs was proved in~\cite[Theorem 8]{vince2004integrity}. The argument there, along with Cheeger's inequality~\cite[Theorem 2.4]{hlw2006}, can be used to prove the weaker bound of $\iota(G) \geq n \min \{1/2, (d - \lambda)/(3d - \lambda)\}$.
\end{remark}

\begin{remark}
When applied to $d$-regular Ramanujan graphs, Corollary~\ref{cor:alon} yields a lower bound of $\iota(G) = \left(1 - O \left( d^{-1/2} \right) \right)n$. In Appendix~\ref{app:integrity}, we show that the largest possible integrity of $n$-vertex graphs with average degree at most $d$ is in fact of the form $\iota(G) = \left( 1 - \Theta \left( d^{-1} \log d  \right) \right) n$.
\end{remark}



\section{Constructing Strong Blocking Sets from Graphs}\label{sec:construction}
In this section, we will provide a new general construction inspired by the tetrahedron (see Section~\ref{sec:prelim}). We will use the data from a projective $[n,k,d]_q$ system and a graph on $n$ vertices in order to construct a set of lines with the avoidance property, whose union, in light of Theorem~\ref{thm:avoidance}, forms a strong blocking set.

\begin{definition}
Let $\M=\{P_1,\ldots,P_n\}$ be a set of $n$ points in $\mathrm{PG}(k - 1, q)$ and let  $G=(\M,E)$ be a  graph with vertex set equal to $\M$. We define 
the set of lines
$$\mathcal{L}(\M, G) \coloneqq \{ \langle P_i,P_j\rangle : P_iP_j \in E\}$$
and the set of points
$$\B(\M,G) \coloneqq \bigcup_{\ell \in \mathcal{L}(\M, G)} \ell $$
\end{definition}

\begin{remark} \label{rem:blockingsetsize}
    The size of $\B(\M, G)$ is at most $n + (q - 1)|E|$, since there are $|E|$ lines, each of which contains at most $q - 1$ points not in $\M$. 
\end{remark}

The following result lies at the heart of our construction as it gives a sufficient condition for the line-set $\mathcal{L}(\M, G)$ to satisfy the avoidance property.

\begin{proposition}\label{prop:mainresult}
 Let $\M=\{P_1,\ldots,P_n\}$ be a set of points in $\mathrm{PG}(k-1,q)$ and let $G=(\M,E)$ be a graph whose set of vertices is $\M$. 
 If for every $S\subseteq \M$ there exists a connected component $C$ in $G - S$ such that 
 $$\langle S\cup C\rangle=\PG(k-1,q),$$
 then the set $\mathcal{L}(\M, G) = \{\langle P_i, P_j \rangle : P_iP_j \in E\}$ satisfies the avoidance property; that is, no codimension-$2$ subspace of $\mathrm{PG}(k - 1, q)$ meets every line of  $\mathcal{L}(\M, G)$. 
\end{proposition}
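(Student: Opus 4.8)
The plan is to argue by contradiction. Suppose some codimension-$2$ subspace $\Sigma$ meets every line of $\mathcal{L}(\M,G)$; I will produce a subset $S\subseteq\M$ violating the spanning hypothesis. The natural choice is $S=\M\cap\Sigma$, the points of $\M$ already lying on $\Sigma$, so that $G-S$ is the induced subgraph on the points of $\M$ lying off $\Sigma$. By the hypothesis applied to this $S$, there is a connected component $C$ of $G-S$ with $\langle S\cup C\rangle=\PG(k-1,q)$ (in particular $G-S$ is nonempty), and I aim to contradict this.

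The geometric heart of the argument is to understand, for two points $P,Q\notin\Sigma$, when the line $\langle P,Q\rangle$ meets $\Sigma$. Since $\Sigma$ has codimension $2$, the hyperplanes containing it form a pencil of size $q+1$, and projection from $\Sigma$ sends each point off $\Sigma$ to the unique hyperplane of this pencil through it, giving a map to $\PG(1,q)$. I would prove the equivalence: $\langle P,Q\rangle\cap\Sigma\neq\emptyset$ if and only if $P$ and $Q$ have the same image under this projection, i.e.\ they span a common hyperplane together with $\Sigma$. One direction is immediate --- if $P,Q$ lie in a common hyperplane $H\supseteq\Sigma$ then the whole line $\langle P,Q\rangle$ lies in $H$, and inside $H$ a line and the codimension-$1$ subspace $\Sigma$ must meet by a dimension count. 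For the converse, if $R\in\langle P,Q\rangle\cap\Sigma$ then $\langle\Sigma,P\rangle$ contains both $R$ and $P$, hence the whole line, hence $Q$, forcing the two hyperplanes $\langle\Sigma,P\rangle$ and $\langle\Sigma,Q\rangle$ to coincide.

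With this equivalence in hand the rest is bookkeeping. Every edge $P_iP_j$ of $G-S$ joins two points off $\Sigma$, and by assumption $\Sigma$ meets $\langle P_i,P_j\rangle$, so $P_i$ and $P_j$ have the same projection. Thus the projection from $\Sigma$ is constant along every edge of $G-S$, hence constant on each connected component, which means every component $C$ lies in a single hyperplane $H_C\supseteq\Sigma$. Since $S\subseteq\Sigma\subseteq H_C$ as well, we obtain $\langle S\cup C\rangle\subseteq H_C\neq\PG(k-1,q)$ for \emph{every} component $C$, contradicting the existence of a spanning component supplied by the hypothesis.

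I expect the main obstacle to be the clean proof of the projection equivalence, and in particular getting the dimension count right so that a line contained in a hyperplane $H$ really does meet the codimension-$1$ subspace $\Sigma$ of $H$; once that is secured, the transfer to connected components via constancy of the projection is routine.
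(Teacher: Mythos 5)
Your proof is correct and follows essentially the same route as the paper's: a contradiction argument with $S=\Sigma\cap\M$, showing that the distinguished spanning component is in fact trapped inside a single hyperplane through the codimension-$2$ space. The only difference is cosmetic --- where you phrase the key step as constancy of the projection from $\Sigma$ along edges, the paper walks along a path in the component using the intersection points $Q_{ij}=\ell_{ij}\cap\Sigma$ to show $C\subseteq\langle \Sigma\cup\{P_r\}\rangle$; the two formulations are interchangeable.
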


\begin{proof}
Say $G$ satisfies the property and, for the sake of contradiction, let $H$ be a codimension-$2$ subspace that meets every line in $\mathcal{L} = \mathcal{L}(\M, G)$. 
Let $S = H \cap \M$ and let $C$ be a connected component of $G-S$ such that $S$ and $C$ together span the whole space. 

For every edge $e = P_iP_j$ whose endpoints $P_i, P_j$ lie in $\M \setminus S$, there is a corresponding line $\ell_{ij} = \langle P_i, P_j \rangle \in \mathcal{L}$, which by our assumption intersects $H$. 
Since $P_i, P_j \not\in H$, there must be a unique point $Q_{ij} \in \ell_{ij} \cap H$. Thus, writing $\mathcal{Q}$ for the set $\left\{ Q_{ij} : P_i P_j \in E, P_i, P_j \notin S \right\}$, we have $S \cup \mathcal{Q} \subseteq H$.

Now observe that for an edge $P_i P_j \in E$ with endpoints $P_i, P_j \notin S$, if a subspace contains both $P_i$ and $Q_{ij}$, then it must also contain $P_j$, which lies on the line spanned by $P_i$ and $Q_{ij}$. Fixing some point $P_r$ in the component $C \subseteq \M \setminus S$, since every point in $C$ is connected by a path to $P_r$, the previous observation implies that any subspace containing $\mathcal{Q} \cup \{ P_r \}$ must contain all of $C$. Hence,
\[ \langle H \cup \{ P_r \} \rangle \supseteq \langle S \cup \mathcal{Q} \cup \{ P_r \} \rangle \supseteq \langle S \cup C \rangle = \mathrm{PG}(k-1,q). \]
This is a contradiction, as $H$ is a codimension-$2$ subspace, and thus $\langle H \cup \{P_r\} \rangle$ has codimension at least $1$.
\end{proof}

Proposition~\ref{prop:mainresult} provides a general method of constructing strong blocking sets by combining a graph $G$ with a set $\M$ of points in a projective space. However, the construction requires nontrivial interplay between $G$ and $\M$ and their local properties, and it seems quite difficult to design them simultaneously. For this reason, we will simplify the approach by assuming the worst-case global parameters.

\begin{lemma}\label{lem:strongBlocking_S(G)}
 Let $\M$ be a projective $[n,k,d]_q$ system and let $G=(\M,E)$ be a graph of integrity $\iota(G) \geq n - d + 1$.
 Then $\mathcal{L}(\M,G)$ satisfies the avoidance property, and thus $\B(\M,G)$ is a strong blocking set in $\mathrm{PG}(k - 1, q)$ of size at most $n + (q - 1)|E|$. 
\end{lemma}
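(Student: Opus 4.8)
The plan is to deduce Lemma~\ref{lem:strongBlocking_S(G)} from Proposition~\ref{prop:mainresult} by verifying that the global integrity hypothesis $\iota(G) \geq n - d + 1$ forces the local spanning condition required there. Concretely, I would fix an arbitrary subset $S \subseteq \M$ and produce a connected component $C$ of $G - S$ with $\langle S \cup C\rangle = \mathrm{PG}(k-1,q)$; once this is established, Proposition~\ref{prop:mainresult} immediately gives the avoidance property for $\mathcal{L}(\M,G)$, and then Theorem~\ref{thm:avoidance} together with Remark~\ref{rem:blockingsetsize} yields that $\B(\M,G)$ is a strong blocking set of size at most $n + (q-1)|E|$. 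So the entire content is the reduction of the spanning condition to the integrity bound.

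The key idea is to let $C$ be a \emph{largest} connected component of $G - S$ and to bound how many points of $\M$ fail to lie in $S \cup C$. By definition of integrity, $|S| + \kappa(G - S) \geq \iota(G) \geq n - d + 1$, where $\kappa(G-S) = |C|$ is the size of the largest component. Hence the number of points of $\M$ lying outside $S \cup C$ is
\[
n - |S| - |C| \;\leq\; n - \iota(G) \;\leq\; d - 1.
\]
In other words, at most $d - 1$ points of $\M$ are missed by $S \cup C$. The plan is then to invoke the defining property of the projective system: since $\M$ is a projective $[n,k,d]_q$ system, every hyperplane misses at least $d$ points of $\M$ (because $|H \cap \M| \leq n - d$ by Definition~\ref{def:projective_system}). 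Equivalently, no hyperplane can contain all but $d-1$ (or fewer) of the points of $\M$. Since $S \cup C$ omits at most $d-1$ points, the set $S \cup C$ cannot be contained in any hyperplane, and therefore $\langle S \cup C\rangle = \mathrm{PG}(k-1,q)$. This is exactly the hypothesis of Proposition~\ref{prop:mainresult}.

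The one subtlety I would be careful about is the direction of the minimum-distance inequality and the phrase ``spanning set.'' The defining equation $d = n - \max_H |H \cap \M|$ says the maximum intersection with a hyperplane is $n - d$, so any hyperplane contains at most $n-d$ points of $\M$, i.e. misses at least $d$ points; this is what drives the argument, and it is worth stating cleanly rather than relying on intuition about the minimum distance. A second point to confirm is that taking $C$ to be the largest component is legitimate: Proposition~\ref{prop:mainresult} only asks for the existence of \emph{some} component with the spanning property, and the largest one is the natural candidate since it minimizes the number of omitted vertices. I do not expect any genuine obstacle here — the proof is essentially a one-line counting argument chaining the integrity bound to the projective-system bound — so the main care is bookkeeping: making sure the ``$+1$'' in $\iota(G) \geq n - d + 1$ correctly produces ``at most $d-1$ omitted points'' rather than ``at most $d$,'' which is precisely what is needed to contradict containment in a hyperplane.
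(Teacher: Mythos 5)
Your proposal is correct and follows essentially the same route as the paper: take the largest component $C$ of $G-S$, use $|S|+|C|\ge \iota(G)\ge n-d+1$ together with the fact that every hyperplane contains at most $n-d$ points of $\M$ to conclude $\langle S\cup C\rangle=\mathrm{PG}(k-1,q)$, and then invoke Proposition~\ref{prop:mainresult}, Theorem~\ref{thm:avoidance} and Remark~\ref{rem:blockingsetsize}. The bookkeeping you flag (the ``$+1$'' yielding at most $d-1$ omitted points) is handled exactly as in the paper's argument.
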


\begin{proof}
 Let $S$ be an arbitrary subset of $\M$. 
 Since $\iota(G) \geq n - d + 1$, there exists a connected component $C$ in $G$ such that $|S| + |C| \geq n - d + 1$. 
 From the definition of projective systems (see Section~\ref{sec:prelim}), it follows that every hyperplane meets $\M$ in at most $n - d$ points.
 Therefore, $S \cup C \subseteq \M$ is not contained in any hyperplane of $\mathrm{PG}(k - 1, q)$, thus implying $\langle S \cup C \rangle = \mathrm{PG}(k - 1, q)$. 
 From Proposition~\ref{prop:mainresult}, we conclude that $\mathcal{L}(\M, G)$ satisfies the avoidance property and thus, by Theorem~\ref{thm:avoidance}, $\B(\M, G)$ is a strong blocking set. As per Remark~\ref{rem:blockingsetsize}, $|\B(\M, G)| \leq n + (q - 1)|E|$. 
\end{proof}

We now prove the main result of our paper by giving an explicit construction of strong blocking sets in $\mathrm{PG}(k - 1, q)$ with size linear in $q k$. 

\begin{theorem}\label{thm:abs_const}
    There is an absolute constant $c$ such that for every prime power $q$, there exists an explicit construction of strong blocking sets of size at most $c q k_i$ in $\mathrm{PG}(k_i - 1, q)$, for some infinite increasing sequence $\{k_i\}_{i \in \mathbb{N}}$.
\end{theorem}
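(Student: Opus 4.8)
The plan is to instantiate Lemma~\ref{lem:strongBlocking_S(G)} by feeding it a projective system coming from an explicit asymptotically good code together with an explicit constant-degree near-Ramanujan graph, and to check that the two relevant global parameters---the relative distance of the code and the integrity deficit of the graph---can be balanced using only absolute constants. The point of passing to worst-case global parameters (as in Lemma~\ref{lem:strongBlocking_S(G)}) is precisely that it decouples the design of the code from the design of the graph, so the two ingredients may be chosen independently.

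First I would fix, via Justesen's construction recalled in the introduction, an explicit family of nondegenerate $[n_i, k_i, d_i]_q$ codes with $n_i \to \infty$, rate $k_i/n_i \geq R_0$, and relative distance $d_i/n_i \geq \delta_0$, where $R_0, \delta_0 > 0$ are absolute constants that do not depend on $q$ (here one uses $H_q^{-1}(1/2) \geq H_2^{-1}(1/2) > 0.11$). If a code is degenerate I discard the identically-zero coordinates, which only shortens it; so I may assume nondegeneracy and take the columns of a generator matrix to obtain projective $[n_i, k_i, d_i]_q$ systems $\M_i \subseteq \mathrm{PG}(k_i - 1, q)$. Next I choose the degree. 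Since $\tfrac{2\lambda}{D+\lambda}$ with $\lambda \approx 2\sqrt{D-1}$ behaves like $4/\sqrt{D}$ and hence tends to $0$ as $D \to \infty$, I fix an \emph{even} absolute constant $D$ and a small absolute constant $\varepsilon > 0$ so that, writing $\lambda = 2\sqrt{D-1}+\varepsilon$, one has $\tfrac{2\lambda}{D+\lambda} < \delta_0$. By Theorem~\ref{thm:epsilon_Ram}, for every $n_i \geq n_0(D,\varepsilon)$ (and $n_i D$ is automatically even since $D$ is even) there is an explicit $(n_i, D, \lambda)$-graph $G_i$, whose vertex set I identify with $\M_i$ in an arbitrary way.

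It then remains to verify the hypothesis of Lemma~\ref{lem:strongBlocking_S(G)} and to compute the size. Corollary~\ref{cor:alon} gives $\iota(G_i) \geq \tfrac{D-\lambda}{D+\lambda} n_i = n_i - \tfrac{2\lambda}{D+\lambda} n_i$. Because $\tfrac{2\lambda}{D+\lambda} < \delta_0 \leq d_i/n_i$ with a fixed positive gap, for all sufficiently large $i$ the deficit $\tfrac{2\lambda}{D+\lambda} n_i$ falls below $d_i - 1$, so $\iota(G_i) \geq n_i - d_i + 1$. Lemma~\ref{lem:strongBlocking_S(G)} now yields that $\B(\M_i, G_i)$ is a strong blocking set in $\mathrm{PG}(k_i-1,q)$ of size at most $n_i + (q-1)|E_i| = n_i + (q-1)\tfrac{D n_i}{2} \leq \tfrac{D}{2}\,q\,n_i$, where the last inequality uses $D \geq 2$. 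Converting via $n_i \leq k_i/R_0$ gives size at most $\tfrac{D}{2R_0}\,q\,k_i$, so $c = D/(2R_0)$ is an absolute constant; discarding finitely many small indices and passing to a subsequence makes $\{k_i\}$ strictly increasing and infinite.

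The one genuinely delicate point, and the heart of the argument, is that the degree $D$ must be chosen as an absolute constant that works uniformly in $q$. This is possible precisely because the relative distance $\delta_0$ of the explicit code family can be taken independent of $q$ (the key input from Justesen), while the integrity deficit $\tfrac{2\lambda}{D+\lambda}$ of a $D$-regular near-Ramanujan graph is likewise independent of $q$ and can be driven below $\delta_0$ simply by enlarging $D$. Everything else is bookkeeping: ensuring nondegeneracy so the codes yield projective systems, matching code lengths with admissible graph orders (handled by taking $D$ even and $i$ large), and absorbing the additive $+1$ in the integrity inequality (handled for large $n_i$ by the fixed multiplicative gap between $\tfrac{2\lambda}{D+\lambda}$ and $\delta_0$).
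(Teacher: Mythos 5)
Your proposal is correct and follows essentially the same route as the paper's proof: Justesen codes for a $q$-independent rate and relative distance, the near-Ramanujan graphs of Theorem~\ref{thm:epsilon_Ram} with a constant degree chosen so that the integrity deficit $\tfrac{2\lambda}{D+\lambda}$ drops below the relative distance, and then Corollary~\ref{cor:alon} feeding into Lemma~\ref{lem:strongBlocking_S(G)}. The only differences are cosmetic bookkeeping (explicitly handling nondegeneracy, taking $D$ even, and absorbing the $+1$ via the fixed gap for large $i$ rather than building it into the choice of $\lambda$), none of which changes the argument.
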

\begin{proof}
    Let $R$ be any constant satisfying $0 < R < 1/2$ and let $\delta = 0.11(1 - 2R)$. 
    Let $\M_i$ be the projective $[n_i, k_i, d_i]_q$ systems given by the Justesen construction~\cite{justesen1972class}, which exist for an infinite increasing sequence $\{k_i\}_{i \in \mathbb{N}}$.
    Then $\lim_{i \rightarrow \infty} k_i/n_i = R$ and $\lim_{i \rightarrow \infty} d_i/n_i \geq (1 - 2R)H_q^{-1}(1/2) > \delta$. 
    Therefore, there exists an $i_0$, which we may assume to be sufficiently large for all subsequent calculations, such that for all $i \geq i_0$, we have $d_i/n_i \geq \delta$ and $k_i/n_i \geq R/2$.
    Let $\{G_i\}_{i \geq i_0}$ be an explicit family of $(n_i, d, \lambda)$-graphs, where $d$ and $\lambda$ are positive constants for which $(d - \lambda)/(d + \lambda) \geq 1 - \delta + 1/n_i$. 
    From Theorem~\ref{thm:epsilon_Ram}, it follows that such an explicit construction of graphs is always possible.
    By Corollary~\ref{cor:alon}, we have $\iota(G_i) \geq (1 - \delta)n_i + 1 \geq n_i - d_i + 1$.
    Therefore, by Lemma~\ref{lem:strongBlocking_S(G)}, $\B(\M_i, G_i)$ is a strong blocking set in $\mathrm{PG}(k_i - 1, q)$ of size at most \[n_i + (q - 1) \frac{d n_i}{2} < \frac{d}{2}qn_i \leq \frac{d}{R} q k_i.\]
    This concludes the proof with $c = \frac{d}{R}$. 
\end{proof}

\section{Strong blocking sets from expander graphs and AG codes} \label{sec:AGcodes}
Using the construction of Theorem~\ref{thm:abs_const}, the best constant $c$ that we get is quite large; for the optimal choice of $R$, it is approximately $c \simeq 8276$. However, we can reduce it substantially by replacing the Justesen codes with some families of AG codes and -- depending on the field -- by using field reduction. In this section we optimize the value of the constant $c$ in our construction for all values of $q$. 

To this end, we use the asymptotically good Algebraic-Geometry (AG) codes, explicit constructions of which can be found in~\cite{tsfasman1982modular,garcia1995tower,garcia1996asymptotic}. In particular, for every square prime power $q$ and $R, \delta > 0$ satisfying $R + \delta \ge 1 - ( \sqrt{q} - 1)^{-1}$, we can construct an $(R, \delta)_q$-family of $[n_i, k_i, d_i]_q$ codes for some increasing sequences $\{n_i\}_{i \in \N}, \{k_i\}_{i \in \N}$ and $\{d_i\}_{i \in \N}$.

\begin{definition} \label{def:AGprojectivesystem}
	Given a square prime power $q$, for every $R \in (0,1)$, set $\delta = 1 - R - (\sqrt{q} - 1)^{-1}$. Given the $(R,\delta)_q$-family of $[n_i,k_i,d_i]_q$ codes described above, we denote by $\{\mA_{n_i,R}\}_{i \in \N}$ the associated $(R, 1 - R - (\sqrt{q} - 1)^{-1})_q$-family of projective $[n_i, k_i, d_i]_q$ systems.
\end{definition}

With this notation in place, we can now proceed to describe our improved constructions.

\subsection{Quadratic Fields}

We start with a simple result, obtained by combining Lemma~\ref{lem:strongBlocking_S(G)} and Corollary~\ref{cor:alon} with the explicit construction of expander graphs given by Theorem~\ref{thm:epsilon_Ram}. 

\begin{theorem}\label{thm:Ramanujan_AG}
Let $d\ge 3$, let $q>4$ be a square prime power such that $(\sqrt{q}-1)^{-1}<\frac{d-2\sqrt{d-1}}{d+2\sqrt{d-1}}$, and let $\varepsilon >0$. Then there is an increasing sequence $\{k_i\}_{i \in \N}$ for which we can explicitly construct strong blocking sets in $\PG(k_i-1,q)$
of size at most
  $$ \left(\frac{d(d+2\sqrt{d-1})(\sqrt{q}-1)}{2 \left( d(\sqrt{q}-2)-2\sqrt{q(d-1)} \right)}+\varepsilon \right)k_iq.$$
\end{theorem}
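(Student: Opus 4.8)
The plan is to instantiate the general machinery of Lemma~\ref{lem:strongBlocking_S(G)} with the AG projective systems of Definition~\ref{def:AGprojectivesystem} playing the role of $\M_i$ and the near-Ramanujan graphs of Theorem~\ref{thm:epsilon_Ram} playing the role of $G_i$, exactly as in the proof of Theorem~\ref{thm:abs_const} but with the parameters tuned to the given field. The one free parameter is the rate $R$ of the AG family: a larger $R$ forces a larger ratio $k_i/n_i$, which shrinks the size relative to the dimension, but it also shrinks the relative distance $\delta = 1 - R - (\sqrt q - 1)^{-1}$, which makes the integrity requirement $\iota(G_i) \ge n_i - d_i + 1$ (here $d_i$ is the \emph{minimum distance}, not the graph degree $d$) harder to meet. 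I would first locate the threshold value of $R$ at which these two competing constraints balance.

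Concretely, a $d$-regular $(n_i, d, \lambda)$-graph with $\lambda \le 2\sqrt{d-1} + \varepsilon'$ satisfies, by Corollary~\ref{cor:alon},
$$\iota(G_i) \ge \frac{d - \lambda}{d + \lambda}\, n_i \ge \frac{d - 2\sqrt{d-1} - \varepsilon'}{d + 2\sqrt{d-1} + \varepsilon'}\, n_i,$$
while the system $\mA_{n_i, R}$ has $d_i/n_i \to \delta$, so $n_i - d_i + 1 \le (1 - \delta + o(1))n_i = (R + (\sqrt q - 1)^{-1} + o(1))n_i$. Hence the hypothesis of Lemma~\ref{lem:strongBlocking_S(G)} holds for all large $i$ as soon as $R + (\sqrt q - 1)^{-1} < (d - 2\sqrt{d-1})/(d + 2\sqrt{d-1})$, i.e. as soon as $R$ is strictly below
$$R^\ast := \frac{d - 2\sqrt{d-1}}{d + 2\sqrt{d-1}} - \frac{1}{\sqrt q - 1},$$
which is positive precisely because of the assumed inequality $(\sqrt q - 1)^{-1} < (d - 2\sqrt{d-1})/(d + 2\sqrt{d-1})$. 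So I would fix some $R < R^\ast$ (to be pushed up to $R^\ast$ at the very end), choose $\varepsilon'$ small enough that the displayed integrity bound still exceeds $R + (\sqrt q - 1)^{-1}$, and apply Theorem~\ref{thm:epsilon_Ram} to build the graphs $G_i$ on the $n_i$ vertices of $\M_i$.

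With the avoidance property secured, Lemma~\ref{lem:strongBlocking_S(G)} yields a strong blocking set $\B(\M_i, G_i)$ of size at most $n_i + (q-1)|E_i| = n_i + (q-1)\tfrac{d}{2}n_i$, since $G_i$ is $d$-regular. Dividing by $k_i q$ and using $k_i/n_i \to R$ gives a coefficient tending to $\tfrac{1}{R}\cdot\tfrac{2 + (q-1)d}{2q}$, which is strictly below $\tfrac{d}{2R}$ because $d \ge 3$; letting $R \uparrow R^\ast$ and absorbing all lower-order slack into $\varepsilon$ then produces the bound $(\tfrac{d}{2R^\ast} + \varepsilon)k_i q$. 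The final step is the algebraic identity $\tfrac{1}{R^\ast} = \tfrac{(d + 2\sqrt{d-1})(\sqrt q - 1)}{d(\sqrt q - 2) - 2\sqrt{q(d-1)}}$, obtained by placing $R^\ast$ over the common denominator $(d + 2\sqrt{d-1})(\sqrt q - 1)$ and checking that the numerator $(d - 2\sqrt{d-1})(\sqrt q - 1) - (d + 2\sqrt{d-1})$ collapses to $d(\sqrt q - 2) - 2\sqrt{q(d-1)}$; multiplying by $d/2$ reproduces the stated coefficient. I expect the only genuine obstacle to be the bookkeeping of the two limits: one must fix $R$ and $\varepsilon'$ first, send $i \to \infty$ to validate both the integrity inequality and the size estimate, and only then optimize $R \to R^\ast$, so the order of the limits is essential. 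A minor technical point is that Theorem~\ref{thm:epsilon_Ram} requires $n_i d$ even and $n_i \ge n_0(d,\varepsilon')$; the size condition is automatic for large $i$, and the parity can be arranged by passing to a subsequence of $\{k_i\}$ (or taking $d$ even), which is harmless since the statement only claims the existence of \emph{some} increasing sequence $\{k_i\}$.
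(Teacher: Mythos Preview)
Your proposal is correct and follows essentially the same route as the paper's proof: instantiate Lemma~\ref{lem:strongBlocking_S(G)} with the AG systems of Definition~\ref{def:AGprojectivesystem} and the near-Ramanujan graphs of Theorem~\ref{thm:epsilon_Ram}, use Corollary~\ref{cor:alon} to verify the integrity requirement, bound the size by $n_i + (q-1)\tfrac{d}{2}n_i < \tfrac{d}{2}q n_i$, and optimize the rate. The only cosmetic difference is that the paper fixes a single small parameter $\varepsilon_1$ up front (so $R = \tfrac{d-\lambda}{d+\lambda} - \varepsilon_1 - (\sqrt q - 1)^{-1}$ with $\lambda = 2\sqrt{d-1}+\varepsilon_1$) rather than phrasing it as a limit $R \uparrow R^\ast$, but the two are equivalent; your remark on the parity of $n_i d$ is a detail the paper leaves implicit.
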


Before we proceed with the proof, let us explore what this result implies about that constant in the bound on the size of strong blocking sets in $\mathrm{PG}(k-1,q)$ when $q$ is a square. For each such $q$, we can choose an optimal value for $d$ to minimize the bound. This amounts to finding the minimum values of the function 
\[ F_q(d) = \frac{d ( d + 2 \sqrt{d-1} )( \sqrt{q} - 1) }{ 2 \left( d ( \sqrt{q} - 2) - 2 \sqrt{q(d-1)} \right)}, \]
where we can extend the domain to $\R_{>2}$. To simplify the calculations, we can make the substitution $y = \sqrt{d - 1}$, and then find the local extrema by setting the derivative equal to zero. This amounts to finding the zeros of the polynomial
\[ \psi_q(y) = \sqrt{q} (y-1)(y^3 - 2y^2 - y - 2) - 2(y^2+1)^2. \]
As $q$ grows, the roots of this polynomial converge to those of $(y-1)(y^3 - 2y^2 - y - 2)$, and the unique root in our domain of interest ($y > 1$) is
\[ y_0 = \frac13 \left( 2 + (44 - 3 \sqrt{177})^{\tfrac13} + (44 + 3 \sqrt{177})^{\tfrac13} \right). \]
Hence, for large values of $q$, $F_q(d)$ will be minimized for 
\[ d \approx d_0 = 1 + y_0^2 = 3 + \frac13 (459 - 12 \sqrt{177})^{\tfrac13} + \frac13 (459 + 12 \sqrt{177})^{\tfrac13} \approx 8.0701, \]
and so one should take $d = 8$ or $9$. It is straightforward to verify that, for large enough $q$, we have $F_q(8) < F_q(9)$, and hence, as $q$ tends to infinity, the optimal constant this construction provides is
\[ \lim_{q \to \infty} F_q(8) = \frac49 ( 23 + 8 \sqrt{7} ) \approx 19.63, \]
a very significant saving compared to the construction from the previous section. For smaller values of $q$, we can compute the optimal choice of $d$ and the corresponding constant, and these are given in Table~\ref{table1}.

\begin{table}[ht]
\label{tab:ram_AG}
\begin{center}
\begin{tabular}{|c|c|c|}
\hline
    $q$  & $\mathrm{argmin} \; F_q(d)$ & upper bound/$k(q+1)$   \\
    \hline 
    \hline 
    $9$ & $d=85$ &  $292.68 $ \\
    $16$ & $d=37$ & $104.60 $ \\
    $25$ & $d=26$ & $66.86$\\
    $49$ & $d=18$ & $43.91$\\
    $64$ & $ d=16$ & $39.07$\\
    $81$ & $d=15$ & $35.83$\\
    $121$ & $d=13$ & $31.76$ \\
    $169$ & $d=12$ & $29.31$ \\
    $256\leq q \leq 361$ & $d=11$ & $27.06$ \\
    $529\leq q \leq 1024$ & $d=10$ & $24.44$\\
    $1369\leq q \leq 11881$ & $d=9$ & $22.46$ \\
    $q\geq 12769$ & $d=8$ & $20.52$ \\
    \hline
\end{tabular}
\caption{
{For given ranges of square prime powers, this table provides the values of $d$ that minimize the size of the strong blocking sets obtained by Theorem~\ref{thm:Ramanujan_AG}, and upper bounds on the corresponding sizes.}}\label{table1}
\end{center}
\end{table}

Sufficiently motivated, we now prove the theorem.

\begin{proof}[Proof of Theorem~\ref{thm:Ramanujan_AG}]
Let $\varepsilon_1 = \varepsilon_1(d,q) > 0$ be sufficiently small, and set $\lambda = 2 \sqrt{d-1} + \varepsilon_1$. We have $(\sqrt{q} - 1)^{-1} < \frac{d-\lambda}{d+ \lambda} - 2\varepsilon_1$, and set $R = \frac{d-\lambda}{d + \lambda} - \varepsilon_1 - (\sqrt{q} - 1)^{-1}$ and $\delta = 1 - R - (\sqrt{q} - 1)^{-1} = 1 - \frac{d-\lambda}{d+\lambda} + \varepsilon_1$.

Let $\{ \mA_{n_i,R} \}_{i \in \N}$ be the $(R,\delta)_q$-family of projective $[n_i, k_i, d_i]_q$ systems from Definition~\ref{def:AGprojectivesystem}. Theorem~\ref{thm:epsilon_Ram} shows that there is some $M$ for which we obtain an explicit sequence $\{G_i\}_{i > M}$ of $(n_i, d, \lambda)$-graphs. Corollary~\ref{cor:alon} gives
\[ \iota(G_{n_i}) \ge n_i \frac{d-\lambda}{d+\lambda} = n_i \left( 1 - \delta + \varepsilon_1\right). \]

Since $\lim_{i \to \infty} \frac{d_i}{n_i} = \delta$, we have $\iota(G_{n_i}) \ge n_i - d_i + 1$ for sufficiently large $i$. Thus, by Lemma~\ref{lem:strongBlocking_S(G)}, $\B(\mA_{n_i,R}, G_{n_i})$ is a strong blocking set in $\mathrm{PG}(k_i-1,q)$. Since $G_{n_i}$ has $\tfrac12 n_i d$ edges, we have
\[ \left| \B(\mA_{n_i,R},G_{n_i}) \right| \le n_i + (q-1)\frac{n_id}{2} < \frac{n_id}{2}q. \]

Now, since $\lim_{i \to \infty} \frac{k_i}{n_i} = R$, we have $n_i \le \frac{k_i}{R - \varepsilon_1}$ for sufficiently large $i$. Making this substitution, and recalling our choice of $R = \frac{d-\lambda}{d+\lambda} - \varepsilon_1 - (\sqrt{q} - 1)^{-1}$, our upper bound becomes
\[ \frac{n_id}{2}q \le \frac{d}{2(R - \varepsilon_1)}k_1 q = \frac{d(d + \lambda)(\sqrt{q} - 1)}{2 \left( (d-\lambda)(\sqrt{q} - 1) - (d + \lambda) - 2 \varepsilon_1 (d + \lambda)(\sqrt{q} - 1) \right) } k_i q. \]

If we choose $\varepsilon_1$ to be sufficiently small, we obtain the upper bound 
\[ \left| \B( \mA_{n_i,R}, G_{n_i} ) \right| \le \left( \frac{d(d+\lambda)(\sqrt{q} - 1)}{2 \left( (d - \lambda)(\sqrt{q} - 1) - (d+ \lambda) \right)} + \frac{\varepsilon}{2} \right) k_i q. \]

Recalling that $\lambda = 2 \sqrt{d-1} + \varepsilon_1$, we have 
\[  \frac{d(d+\lambda)(\sqrt{q} - 1)}{2 \left( (d - \lambda)(\sqrt{q} - 1) - (d+ \lambda) \right)} = \frac{d(d + 2 \sqrt{d-1} )(\sqrt{q} - 1) + \varepsilon_1 d (\sqrt{q} - 1) }{2 \left( d ( \sqrt{q} - 2) - 2 \sqrt{q(d-1)} - \varepsilon_1 \sqrt{q} \right)}, \]
and the result follows provided $\varepsilon_1$ is suitably small.
\end{proof}

\subsection{Non-Quadratic Fields}

Theorem~\ref{thm:Ramanujan_AG} shows that replacing the Justesen codes with AG codes in our construction can greatly reduce the size of the strong blocking sets we obtain. However, the one drawback is that the construction is only possible over quadratic fields. In this section we show how to use one final trick --- field reduction --- to take a strong blocking set over $\F_{q^2}$ and build from it a strong blocking set over $\F_q$ that is not much larger.

We first recall the \textbf{field reduction map}, which we denote by $\mathcal F_{q,r}$. This map uses the fact that points of $\PG(K-1,q^r)$ are $1$-dimensional $\F_{q^r}$-subspaces of $\F_{q^r}^K$, which in turn can be viewed as $r$-dimensional $\F_q$-subspaces of $\F_{q}^{rK}$. Hence, $\mathcal{F}_{q,r}$ sends points of $\PG(K-1,q^r)$ to $(r-1)$-spaces of $\PG(rK-1,q)$; see~\cite{lavrauw2015field} for a survey on field reduction.

As shown in~\cite{alfarano2022three}, the field reduction map also preserves some key properties related to strong blocking sets. We begin with a definition.

\begin{definition}\label{def:derived}
Let $\mathcal{L} = \{ \ell_1, \ell_2, \hdots, \ell_t\}$ be a collection of lines in $\PG(K-1, q^2)$. We say a set of points $\Lambda \subseteq \PG(K-1,q^2)$ is \textbf{viable for $\mathcal{L}$} if $\Lambda = \cup_{i=1}^{t} \Lambda^{(i)}$, where each $\Lambda^{(i)} = \left\{ \lambda_1^{(i)}, \lambda_2^{(i)}, \lambda_3^{(i)}, \lambda_4^{(i)} \right\} \subseteq \ell_i$ is a set of four points that do not lie on a common $\F_q$-subline of $\ell_i$. 

Given a viable set $\Lambda$, we define the \textbf{derived set} to be the set 
\[ \mathcal{F}_{q,2}(\Lambda) = \left\{ \mathcal F_{q,2}(\lambda_j^{(i)}) : i \in [t], j \in [4] \right\} \]
of lines in $\PG(2K-1, q)$.
\end{definition}

The following result, obtained by combining~\cite[Theorem 4.2]{alfarano2022three} and~\cite[Proposition 4.5]{alfarano2022three}, allows us to turn strong blocking sets in $\PG(k-1,q^2)$ into strong blocking sets in $\PG(2k-1,q)$. This was also highlighted in \cite{ABN2023}, where viable sets are shown to be \textbf{outer strong blocking sets}.

\begin{theorem}[see {\cite{alfarano2022three}}]\label{thm:field_reduction}
    Let $\mathcal{L} = \{ \ell_1, \ldots, \ell_t\}$ be a set of lines in $\PG(K-1,q^2)$ whose union forms a strong blocking set. If $\Lambda$ is a viable set for $\mathcal{L}$, then the union of the lines in the derived set $\mathcal F_{q,2}(\Lambda)$ is a strong blocking set in $\PG(2K-1,q)$. 
\end{theorem}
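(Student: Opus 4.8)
The plan is to verify the defining property of a strong blocking set directly: I will show that for every hyperplane $\mathcal{H}$ of $\PG(2K-1,q)$, the points of $\bigcup \mathcal{F}_{q,2}(\Lambda)$ lying on $\mathcal{H}$ span $\mathcal{H}$. The argument rests on two features of field reduction that I record first. Writing $L_P = \mathcal{F}_{q,2}(P)$ for the line of $\PG(2K-1,q)$ associated with a point $P$ of $\PG(K-1,q^2)$, the lines $\{L_P\}$ form a Desarguesian line spread, each line $\ell_i$ reduces to a solid $\mathcal{S}_i = \mathcal{F}_{q,2}(\ell_i)\cong\PG(3,q)$ carrying the induced regular spread, and field reduction commutes with taking spans, in the sense that $\langle \mathcal{F}_{q,2}(\mathcal{P})\rangle_{\F_q} = \mathcal{F}_{q,2}(\langle \mathcal{P}\rangle_{\F_{q^2}})$ for every point set $\mathcal{P}$. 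Second, writing an $\F_q$-hyperplane $\mathcal{H}$ as the kernel of $v\mapsto \mathrm{Tr}_{\F_{q^2}/\F_q}(\langle a,v\rangle)$, the points $P$ with $L_P\subseteq\mathcal{H}$ are exactly those of the $\F_{q^2}$-hyperplane $H = \{[v]:\langle a,v\rangle = 0\}$ of $\PG(K-1,q^2)$; consequently $\Sigma_H := \mathcal{F}_{q,2}(H)$ is a codimension-$2$ subspace of $\PG(2K-1,q)$ that is a hyperplane of $\mathcal{H}$, while every spread line not contained in $\mathcal{H}$ crosses it in a single point lying off $\Sigma_H$. I call $H$ the companion hyperplane of $\mathcal{H}$.

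The local step is to prove that, for each $i$, the four reduced lines $B_i := \{ L_{\lambda^{(i)}_j} : j\in[4]\}$ form a strong blocking set of the solid $\mathcal{S}_i$; by Theorem~\ref{thm:avoidance} applied inside $\PG(3,q)$ it suffices to show they satisfy the avoidance property there, i.e.\ that no line of $\mathcal{S}_i$ meets all four. This is where the hypothesis that the $\lambda^{(i)}_j$ do not lie on a common $\F_q$-subline enters. In the regular spread on $\mathcal{S}_i$ the $\F_q$-sublines of $\ell_i$ correspond precisely to the reguli, so three of the pairwise-skew spread lines determine a unique regulus, and the only lines of $\mathcal{S}_i$ meeting all three are the transversals of that regulus; such a transversal meets a fourth spread line only if that line too belongs to the regulus, that is, only if all four points lie on one $\F_q$-subline. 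The hypothesis excludes this, giving avoidance and hence the claimed strong blocking property of $B_i$ in $\mathcal{S}_i$.

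For the global step, fix $\mathcal{H}$ and its companion hyperplane $H$, and split the lines of $\mathcal{L}$ according to whether $\ell_i\subseteq H$. If $\ell_i\subseteq H$ then $\mathcal{S}_i\subseteq\Sigma_H\subseteq\mathcal{H}$, so $B_i\cap\mathcal{H}=B_i$ spans $\mathcal{S}_i$; if $\ell_i\not\subseteq H$ then $\mathcal{H}\cap\mathcal{S}_i$ is a plane $\pi_i$, a hyperplane of the solid, so the strong blocking property of $B_i$ in $\mathcal{S}_i$ forces $B_i\cap\mathcal{H}=B_i\cap\pi_i$ to span $\pi_i$. In either case $B_i\cap\mathcal{H}$ spans $\mathcal{S}_i\cap\mathcal{H}$, whence $\langle (\bigcup_i B_i)\cap\mathcal{H}\rangle = \langle \mathcal{F}_{q,2}(\bigcup\mathcal{L})\cap\mathcal{H}\rangle$. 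It then remains to see that this last span is all of $\mathcal{H}$. Since $\bigcup\mathcal{L}$ is a strong blocking set, $(\bigcup\mathcal{L})\cap H$ spans $H$, so by span-preservation the spread lines $L_P$ with $P\in(\bigcup\mathcal{L})\cap H$ span $\Sigma_H$, a hyperplane of $\mathcal{H}$; and since $\bigcup\mathcal{L}$ is not contained in $H$, some line $\ell_i\not\subseteq H$ supplies a spread line crossing $\mathcal{H}$ in a point off $\Sigma_H$. A subspace of $\mathcal{H}$ containing the hyperplane $\Sigma_H$ together with one further point is all of $\mathcal{H}$, which completes the argument.

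I expect the main obstacle to be the local step: making precise, and proving, the correspondence between $\F_q$-sublines of $\ell_i$ and the reguli of the regular spread on $\mathcal{S}_i$ (equivalently, carrying out the André/Bruck–Bose coordinatisation), since the entire solid-level avoidance argument hinges on it. The companion-hyperplane correspondence and span-preservation are routine once the trace description of $\F_q$-linear functionals is in place, and the global assembly is then essentially linear algebra inside $\mathcal{H}$.
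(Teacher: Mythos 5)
The paper does not actually prove this statement --- it is imported wholesale from \cite{alfarano2022three} (their Theorem 4.2 combined with Proposition 4.5) --- so there is no internal proof to compare against; judged on its own, your argument is correct and is essentially the standard field-reduction proof. Your decomposition into a local and a global step is sound: the hyperplane correspondence via the trace form, the identity $\langle \mathcal F_{q,2}(\mathcal P)\rangle = \mathcal F_{q,2}(\langle \mathcal P\rangle)$, and the final assembly (the spread lines over $(\cup\mathcal L)\cap H$ span the codimension-$2$ space $\Sigma_H\subseteq\mathcal H$, and any spread line over a point of $\cup\mathcal L\setminus H$ contributes a point of $\mathcal H\setminus\Sigma_H$) are all correct, using only that a strong blocking set spans and meets every hyperplane in a spanning set. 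The local step is also right: four pairwise disjoint lines of a regular spread of $\PG(3,q)$ admit a common transversal if and only if they lie in a common regulus of the spread, since a transversal of the regulus through three of them has all $q+1$ of its points on lines of that regulus and hence misses every other spread line; the viability hypothesis (no common $\F_q$-subline) rules this out via the classical bijection between $\F_q$-sublines of $\PG(1,q^2)$ and reguli contained in the induced regular spread. You correctly flag that bijection as the one ingredient requiring a genuine proof (a short Bruck--Bose/coordinate computation); everything else is routine. One small presentational point: it is worth stating explicitly that distinct spread lines are disjoint (so the four lines $B_i$ are pairwise skew and the point $L_{P_0}\cap\mathcal H$ really avoids $\Sigma_H$), as both steps quietly rely on this partition property of the spread.
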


This field reduction process is especially effective when used on our strong blocking sets $\B(\M,G)$ constructed from graphs, as the points of $\M$ belong to several lines. 

\begin{lemma}\label{lem:strong_from_strongS}
 Let $\M$ be an $[n,K,d]_{q^2}$ projective system and let $G=(\M,E)$ be a graph. Then we can find a viable set $\Lambda$ of size at most $n + 2|E|$ for the associated set of lines $\mathcal{L}(\M,G)$ in $\PG(K-1,q^2)$.
\end{lemma}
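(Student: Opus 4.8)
The plan is to exploit the fact that the two endpoints of every edge already lie on the corresponding line, so that the vertex set $\M$ supplies two of the four required points on each line of $\mathcal{L}(\M, G)$ at no extra cost. Concretely, for each edge $e = P_iP_j \in E$, write $\ell_e = \langle P_i, P_j\rangle$ for the associated line in $\mathcal{L}(\M, G)$, and set $\Lambda^{(e)} = \{P_i, P_j, \lambda_3^{(e)}, \lambda_4^{(e)}\}$, where $\lambda_3^{(e)}, \lambda_4^{(e)} \in \ell_e$ are two further points to be chosen. Since the $n$ points of $\M$ are shared among all lines incident to them, the union $\Lambda = \bigcup_{e \in E} \Lambda^{(e)}$ satisfies $\Lambda \subseteq \M \cup \{\lambda_3^{(e)}, \lambda_4^{(e)} : e \in E\}$, whence $|\Lambda| \le n + 2|E|$, as required. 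It remains only to choose the extra points so that each $\Lambda^{(e)}$ is a viable quadruple in the sense of Definition~\ref{def:derived}.

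To do this, I would recall the structure of $\F_q$-sublines of the line $\ell_e$, which is a copy of $\PG(1, q^2)$ on $q^2 + 1$ points. Every $\F_q$-subline has $q+1$ points, and any three distinct points of $\ell_e$ lie on a unique $\F_q$-subline. It follows that the $q^2 - 1$ points of $\ell_e \setminus \{P_i, P_j\}$ partition, according to which subline through $P_i$ and $P_j$ they lie on, into exactly $(q^2-1)/(q-1) = q+1$ sublines, each accounting for $q-1$ of these points. I would then pick $\lambda_3^{(e)}$ and $\lambda_4^{(e)}$ on two distinct such sublines, which is possible because there are $q+1 \ge 2$ sublines through $P_i, P_j$ and each contains $q - 1 \ge 1$ points other than $P_i, P_j$.

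With this choice, viability of $\Lambda^{(e)}$ follows from the uniqueness of the subline through three points: if all four points of $\Lambda^{(e)}$ lay on a common $\F_q$-subline $s$, then $s$ would have to be both the unique subline through $P_i, P_j, \lambda_3^{(e)}$ and the unique subline through $P_i, P_j, \lambda_4^{(e)}$, contradicting the fact that $\lambda_3^{(e)}$ and $\lambda_4^{(e)}$ were chosen on different sublines through $P_i, P_j$. Hence $\Lambda$ is viable for $\mathcal{L}(\M, G)$ and has size at most $n + 2|E|$, and the construction goes through for every prime power $q$, since $q \ge 2$ guarantees both that enough sublines exist and that each is nonempty away from $P_i, P_j$. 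The only step demanding genuine care is the subline bookkeeping for $\PG(1, q^2)$ — the uniqueness of the $\F_q$-subline through three points and the resulting partition count; everything else is a direct counting argument.
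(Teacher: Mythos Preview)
Your proof is correct and follows essentially the same approach as the paper: reuse the two edge endpoints $P_i,P_j$ as two of the four required points on each line, then add two further points so that no common $\F_q$-subline contains all four, yielding $|\Lambda|\le n+2|E|$. The only cosmetic difference is that the paper picks $\lambda_3$ arbitrarily and then chooses $\lambda_4$ off the unique subline through $P_i,P_j,\lambda_3$, whereas you phrase this via the partition of $\ell_e\setminus\{P_i,P_j\}$ into $q+1$ sublines through $P_i,P_j$; these are equivalent formulations of the same selection.
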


\begin{proof}
  Let us enumerate the edges of $G$ as $E = \{ e_1, e_2, \hdots, e_m \}$. If $e_i = P_a P_b$, then the corresponding line $\ell_i \in \mathcal{L}(\M, G)$ is given by $\ell_i = \langle P_a, P_b \rangle$. We then take $\lambda^{(i)}_1 = P_a$ and $\lambda^{(i)}_2 = P_b$, and let $\lambda^{(i)}_3 = Q^{(a,b)}_3$ be an arbitrary third point on $\ell_i$. Since any three points on an $\F_{q^2}$-line define a unique $\F_q$-subline, we can then choose a fourth point $\lambda^{(i)}_4 = Q_4^{(a,b)}$ that avoids this subline.

Thus,
\[ \Lambda = \bigcup_{i=1}^m \left\{ \lambda^{(i)}_1, \lambda^{(i)}_2, \lambda^{(i)}_3, \lambda^{(i)}_4 \right\} = \M \cup \left( \bigcup_{P_a P_b \in E} \left\{ Q^{(a,b)}_3, Q^{(a,b)}_4 \right\} \right) \]
is viable for $\mathcal{L}(\M,G)$, and $|\Lambda| \le n + 2|E|$. 
\end{proof}

We can now apply Lemma~\ref{lem:strong_from_strongS} and Theorem~\ref{thm:field_reduction} to the construction from Theorem~\ref{thm:Ramanujan_AG} in order to build small strong blocking sets even when $q$ is not a square.

\begin{theorem}\label{thm:Ramanujan_AG_FR}
Let $d\ge 3$, let $q>2$ be such that $(q-1)^{-1}<\frac{d-2\sqrt{d-1}}{d+2\sqrt{d-1}}$, and let $\varepsilon >0$. Then there is an increasing sequence $\{k_i\}_{i \in \N}$ for which we can explicitly construct strong blocking sets in $\PG(k_i-1,q)$ of size at most
  \[ \left(\frac{(d+1)(d+2\sqrt{d-1})(q-1)}{2 \left( d(q-2)-2q\sqrt{d-1} \right)}+\varepsilon \right)k_i(q+1). \]
\end{theorem}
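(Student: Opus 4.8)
The plan is to reduce the non-quadratic case entirely to the quadratic construction of Theorem~\ref{thm:Ramanujan_AG} via the field reduction map $\mathcal{F}_{q,2}$. The key observation is that $q^2$ is always a square prime power, so I would first run the whole quadratic machinery over the field $\F_{q^2}$ (in dimension $K$) and then descend to $\F_q$ (in dimension $2K$). Concretely, fix $d\ge 3$, introduce a sufficiently small $\varepsilon_1 = \varepsilon_1(d,q) > 0$, set $\lambda = 2\sqrt{d-1}+\varepsilon_1$, and put $R = \frac{d-\lambda}{d+\lambda} - \varepsilon_1 - (q-1)^{-1}$. Since $\sqrt{q^2} = q$, the hypothesis $(q-1)^{-1} < \frac{d-2\sqrt{d-1}}{d+2\sqrt{d-1}}$ is precisely the condition $(\sqrt{q^2}-1)^{-1} < \frac{d-2\sqrt{d-1}}{d+2\sqrt{d-1}}$ required to run the argument of Theorem~\ref{thm:Ramanujan_AG} over $\F_{q^2}$ (and $q^2 > 4$ since $q > 2$). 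I would thus obtain, for an infinite increasing sequence of dimensions $K$, the line set $\mathcal{L}(\mA_{n_i,R}, G_{n_i})$ in $\PG(K-1,q^2)$ satisfying the avoidance property, where $G_{n_i}$ is the $(n_i, d, \lambda)$-graph from Theorem~\ref{thm:epsilon_Ram}; this line set has exactly $|E| = \tfrac12 n_i d$ lines.

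Next I would feed this into the field-reduction step. By Lemma~\ref{lem:strong_from_strongS}, there is a viable set $\Lambda$ for $\mathcal{L}(\mA_{n_i,R}, G_{n_i})$ of size at most $n_i + 2|E| = n_i + n_i d = (d+1)n_i$. Theorem~\ref{thm:field_reduction} then shows that the derived set $\mathcal{F}_{q,2}(\Lambda)$ is a collection of at most $(d+1)n_i$ lines in $\PG(2K-1, q)$ whose union is a strong blocking set. Since each line of $\PG(2K-1,q)$ contains $q+1$ points, this strong blocking set has size at most $(d+1)n_i(q+1)$.

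Finally I would convert this into a bound in the dimension $k_i := 2K$ of the ambient space. Because $\lim_{i\to\infty} K/n_i = R$, for all large $i$ we have $n_i \le K/(R-\varepsilon_1) = k_i/(2(R-\varepsilon_1))$, so the strong blocking set has size at most $\frac{d+1}{2(R-\varepsilon_1)}\, k_i(q+1)$. Substituting $R = \frac{d-\lambda}{d+\lambda} - \varepsilon_1 - (q-1)^{-1}$ and clearing denominators by $(d+\lambda)(q-1)$ turns the constant into
\[ \frac{(d+1)(d+\lambda)(q-1)}{2\left((d-\lambda)(q-1) - (d+\lambda) - 2\varepsilon_1(d+\lambda)(q-1)\right)}. \]
Letting $\varepsilon_1 \to 0$ (so $\lambda \to 2\sqrt{d-1}$) and absorbing the error into $\varepsilon$, the denominator simplifies to $d(q-2) - 2q\sqrt{d-1}$, which yields the claimed bound; the hypothesis on $(q-1)^{-1}$ guarantees this denominator is positive, so the constant is well-defined.

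The genuinely new content here is light, since the heavy lifting is done by Theorems~\ref{thm:Ramanujan_AG} and~\ref{thm:field_reduction} together with Lemma~\ref{lem:strong_from_strongS}. The two points that require care are, first, checking that the quadratic-case hypothesis translates correctly under $q \mapsto q^2$ (using $(\sqrt{q^2}-1)^{-1} = (q-1)^{-1}$) and that all derived objects live in the correct space $\PG(2K-1,q) = \PG(k_i-1,q)$; and second, the bookkeeping around the factor-of-two loss in dimension. I expect the main subtlety to be ensuring the number of derived lines is bounded by the \emph{edge count} $\tfrac12 n_i d$ of $G_{n_i}$ rather than by the (much larger) size of the $\F_{q^2}$-blocking set — this is exactly what makes field reduction efficient, since it forces the final size to grow only by the factor $(q+1)$ times $(d+1)n_i$, with no dependence on the size of the intermediate strong blocking set over $\F_{q^2}$.
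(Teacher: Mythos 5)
Your proposal is correct and follows essentially the same route as the paper: rerun the quadratic construction of Theorem~\ref{thm:Ramanujan_AG} over $\F_{q^2}$ (noting $(\sqrt{q^2}-1)^{-1}=(q-1)^{-1}$) to get a line set with the avoidance property, build a viable set of size at most $(d+1)n_i$ via Lemma~\ref{lem:strong_from_strongS}, apply Theorem~\ref{thm:field_reduction}, and account for the dimension doubling $k_i=2K_i$. The bookkeeping of the constant, including the limit $\varepsilon_1\to 0$, matches the paper's computation.
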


\begin{proof}
This proof follows the same lines as that of Theorem~\ref{thm:Ramanujan_AG}, and so we will mainly highlight the changes. As before, we let $\varepsilon_1 = \varepsilon_1(d,q) > 0$ be sufficiently small and set $\lambda = 2 \sqrt{d-1} + \varepsilon_1$. By assumption, $(q - 1)^{-1} < \frac{d- \lambda}{d + \lambda} - 2 \varepsilon_1$, and we set $R = \frac{ d - \lambda }{ d + \lambda } - \varepsilon_1 - ( q - 1)^{-1}$ and $\delta = 1 - R - (q - 1)^{-1} = 1 - \frac{d - \lambda}{d + \lambda} + \varepsilon_1$.

We then take $\left\{ \mA_{n_i,R} \right\}_{i \in \N}$ to be an $(R,\delta)_{q^2}$-family of projective $[n_i, K_i, d_i]_{q^2}$ systems, and $\{ G_i \}_{i > M}$ a sequence of $(n_i, d, \lambda)$-graphs. As before, our choice of parameters ensures that for sufficiently large $i$, we have $\iota(G_{n_i}) \ge n_i - d_i + 1$. Applying Lemma~\ref{lem:strongBlocking_S(G)}, we deduce that the set of lines $\mathcal{L} = \mathcal{L}(\mA_{n_i,R},G)$ has the avoidance property in $\PG(K-1,q^2)$, and hence, by Theorem~\ref{thm:avoidance}, the union $\B(\mA_{n_i,R},G)$ of those lines is a strong blocking set.

By Lemma~\ref{lem:strong_from_strongS}, we can find a set $\Lambda$ that is viable for $\mathcal{L}$ of size at most $|\mA_{n_i,R}| + 2e(G_{n_i}) = n_i(d+1)$. Theorem~\ref{thm:field_reduction} shows that the union of the lines in the derived set $\F_{q,2}(\Lambda)$ is then a strong blocking set in $\PG(2K_i-1,q)$. Since each line in $\PG(2K_i-1,q)$ has $q+1$ points, this strong blocking set has size at most $n_i(d+1)(q+1)$. Since $R = \lim_{i \to \infty} \frac{K_i}{n_i}$, recalling our choices for the parameters $R$ and $\lambda$ and setting $k_i = 2K_i$ then yields the claimed bound.
\end{proof}

As before, we can determine the optimal degree $d$ to use by minimizing the quantity
\[ R_q(d) := \frac{(d+1)(d + 2 \sqrt{d-1} )(q-1)}{2 \left( d(q-2) - 2q \sqrt{d-1} \right)}. \]
It is again advisable to make the substitution $d = 1 + y^2$, following which we find the zeros of the derivative coincide with those of $\phi_q(y) = q(y-1)(y^3 - 2y^2 - y - 4) - 2(y^2 - y + 1)(y^2 + y + 2)$. As $q$ grows, the roots of $\phi_q$ converge to the roots of $(y-1)(y^3 - 2y^2 - y - 4)$. This polynomial has a unique root $y_0$ that is larger than $1$, and this corresponds to
\[ d_0 = 1 + y_0^2 = 3 + (31 - 2 \sqrt{58})^{\tfrac13} + (31 + 2 \sqrt{58})^{\tfrac13} \approx 9.0967. \]

Hence the asymptotically optimal degree must be either $9$ or $10$, and inspection shows $R_q(10) > R_q(9) \to \frac{5}{49} \left( 113 + 72 \sqrt{2} \right) \approx 21.92$ as $q \to \infty$.

For large $q$, then, Theorem~\ref{thm:Ramanujan_AG_FR} yields a larger strong blocking set than Theorem~\ref{thm:Ramanujan_AG}. However, aside from the fact that Theorem~\ref{thm:Ramanujan_AG_FR} works over any field, not just quadratic ones, it also outperforms Theorem~\ref{thm:Ramanujan_AG} for small values of $q$. More precise estimates are given in Table~\ref{table2}, and in comparison to Table~\ref{table1}, we find that the field reduction can lead to significantly smaller constants when $q$ is small.

\begin{table}[ht]
\begin{center}
\begin{tabular}{|c|c|c|}
\hline
     $q$  & $\mathrm{argmin} \; R_q(d)$ & upper  bound/$k(q+1)$   \\
    \hline 
    \hline 
    $3$ & $d=85$ &  $296.12 $ \\
    $4$ & $d=38$ &  $107.35 $ \\
    $5$ & $d=27$ & $ 69.41 $ \\
    $7$ & $d=19$ & $46.32$\\
    $8$ & $d=17$ & $41.45$\\
    $9$ & $ d=16$ & $38.18$\\
    $11$ & $d=14$ & $34.08$\\
    $13$ & $d=13$ & $31.62$ \\
    $16\leq q \leq 19$ & $d=12$ & $29.36$ \\
    $23\leq q \leq 32$ & $d=11$ & $26.73$\\
    $37 \leq q \leq 109$ & $d=10$ & $24.75$ \\
    $q\geq 113$ & $d=9$ & $22.81$ \\
    \hline
\end{tabular}
\caption{For given ranges of prime powers, this table provides the degrees $d$ that minimize the sizes of the strong blocking sets constructed in Theorem~\ref{thm:Ramanujan_AG_FR}, and upper bounds on their sizes. }\label{table2}
\end{center}
\end{table}

\section{Conclusion} \label{sec:conclusion}

In this paper, we describe a general machinery for constructing strong blocking sets in finite projective spaces starting from a graph and a linear code. In particular, taking explicit constructions of constant-degree expanders and asymptotically good linear codes, we provide the first explicit construction of strong blocking sets in $\PG(k-1,q)$ whose size is linear in both $k$ and $q$. As a consequence, as highlighted in Theorem~\ref{thm:abs_const}, this also provides an explicitly constructed family of asymptotically good minimal codes over $\Fq$ of rate at least $(cq)^{-1}$, for some absolute constant $c$. These constructions are based on our new results on the vertex integrity of a graph. Concretely, Corollary~\ref{cor:alon} bounds the vertex integrity of $d$-regular graphs from below by a quantity only depending on their eigenvalues. Finally, in Section \ref{sec:AGcodes} we optimize the constant $c$: we make use of almost Ramanujan graphs and asymptotically good families of AG codes (Theorem~\ref{thm:Ramanujan_AG}), and combine this construction with the field reduction on a viable set of points, obtaining a derived strong blocking set (Theorem~\ref{thm:Ramanujan_AG_FR}).

It must be noted that the strong blocking sets from Theorem~\ref{thm:Ramanujan_AG_FR} are themselves the unions of lines in $\PG(k-1,q)$, and if $q$ is a square, we can again apply Theorem~\ref{thm:field_reduction} to derive strong blocking sets in $\PG(2k-1,\sqrt{q})$. We can then repeat this process further until we reach a field of non-square order. 
Thus, we can start with 
a strong blocking set over $\F_{q^{2^r}}$, and then get a strong blocking set over $\mathbb{F}_q$ in $r$ steps. 
The calculations, which we omit, are essentially the same as those in the proof of Theorem~\ref{thm:Ramanujan_AG_FR}. The only exception is that since our intermediate blocking sets will not be coming from graphs, we cannot apply Lemma~\ref{lem:strong_from_strongS} each time to produce very economical viable sets. Instead, since we choose four points from each line to make a viable set, we shall bound the number of lines in each iteration as being at most four times larger than in the previous step. This allows us to derive an
explicit construction of strong blocking sets in $\PG(k_i-1,q)$ of size at most
  \[ \left( \frac{2^{r-1}(d+1)(d+2\sqrt{d-1})(q^{2^r}-1)}{d(q^{2^r}-2)-2q^{2^r}\sqrt{d-1}}+\varepsilon \right) k_i(q+1). \]
  for some increasing sequence $\{k_i\}_{i \in N}$, provided that $(q^{2^r}-1)^{-1}<\frac{d-2\sqrt{d-1}}{d+2\sqrt{d-1}}$.


Performing the optimization reveals that the third derivation ($r = 3$) minimizes the size of the blocking sets for $q = 2$ and that the second derivation ($r = 2$) is optimal for $3 \le q \le 5$. For $q \ge 7$, however, the strong blocking sets from Theorems~\ref{thm:Ramanujan_AG} and~\ref{thm:Ramanujan_AG_FR} are already so small --- they are better than the previous best-known constructions~\cite{bartoli2023small, cohen2016yet} --- that repeated derivations offer no improvement. For the convenience of the reader, we summarize in Table~\ref{table3} the smallest strong blocking sets we obtained using our constructions.

\begin{table}[ht!]
\begin{center}
\begin{tabular}{|c|c|c|}
\hline
     $q$  & Construction & upper  bound/$k(q+1)$   \\
    \hline 
    \hline 
    $2$ & $3$rd derivation &  $118$ \\
    $3$ &  $2$nd derivation & $77$\\
    $4$ & $2$nd derivation & $59$ \\
    $5$ & $2$nd derivation & $54$\\
    $7$ & $1$st derivation & $47$\\
    $8$ & $1$st derivation & $  42$\\
    $9$ & $1$st derivation & $39$\\
    $11$ & $1$st derivation & $35$\\
    $13$ & $1$st derivation & $32$ \\
 $16$ &  $1$st derivation & $30$\\
$17$ &  $1$st derivation & $29$\\
 $19$ &  $1$st derivation & $28$\\
    $23\leq q \leq 25$ & $1$st derivation & $27$\\
    $27\leq q \leq 32$ & $1$st derivation & $26$\\
    $37 \leq q \leq 49$ & $1$st derivation & $25$ \\   
      $53 \leq q \leq 109$ & $1$st derivation & $24$ \\   
    $113 \leq q \leq 1217 $ & $1$st derivation & $23$ \\
        $1223 \leq q \leq 12763 $ & $1$st derivation & $22$ \\
         $q\geq 12769$, $q$ non-square & $1$st derivation & $22$ \\
    $12769 \le q < 70603$,  $q$ square & original & $21$ \\
    $q > 70603$, $q$ square  & original & $20$\\
    \hline
\end{tabular}
\caption{For given ranges of prime powers $q$, this table provides the 
best upper bound on the size of the constructed
strong blocking sets in finite projective spaces over the finite field $\Fq$, together with an indication of which construction achieve this: Original (Theorem~\ref{thm:Ramanujan_AG}), 1st derivation (Theorem~\ref{thm:Ramanujan_AG_FR}) or $r$th derivation.}
\label{table3}
\end{center}
\end{table}

Our construction of strong blocking sets can be used to give explicit constructions of affine blocking sets with respect to codimension-$2$ subspaces \cite{Bishnoi2023trifference}. 
Motivated by this problem, it will be interesting to explore a generalization of our construction to $r$-uniform hypergraphs, which would lead to a construction of affine blocking sets with respect to codimension-$r$ subspaces. 

\bibliographystyle{abbrv}
\bibliography{references}

\appendix

\section{Tight bounds for integrity} \label{app:integrity}

In this appendix, we show that the maximum possible integrity of an $n$-vertex graph of average degree $d$ is $\left( 1 - \Theta \left( \frac{\log d}{d} \right) \right)n$, complementing the bounds given in Section~\ref{sec:integrity}.

\begin{proposition} \label{prop:tightintegrity}
Let $d \ge 2$.
\begin{itemize}
	\item[(i)] If $n \ge 48 d$, the integrity of any $n$-vertex graph $G$ of average degree at most $d$ satisfies 
\[ \iota(G) \le \left( 1 - \frac{\log d}{4 d} \right) n. \]
	\item[(ii)] For all $n \ge d$, there are $n$-vertex graphs $G$ of average degree at most $d$ with
\[ \iota(G) \ge \left(1 - \frac{4 \log d}{d} \right) n. \]
\end{itemize}
\end{proposition}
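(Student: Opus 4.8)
The plan is to analyze both bounds through the parameter $z(G)$ of Definition~\ref{def:biindependentsets}, exploiting the two-sided estimate $n - 2z(G) \le \iota(G) \le n - z(G)$ of Proposition~\ref{prop:z(G)}. Part~(i) then reduces to the lower bound $z(G) \ge \frac{\log d}{4d}\,n$, and part~(ii) to exhibiting a graph of average degree at most $d$ with $z(G) \le \frac{2\log d}{d}\,n$.

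For part~(i), the heart of the matter is to produce two disjoint vertex sets with no edge between them, each of size $\frac{\log d}{4d}\,n$, capturing the logarithmic factor that a naive random tripartition (sorting each vertex independently into $A$, $B$, or a separator) fails to achieve --- such a scheme only yields $z(G) = \Omega(n/d)$. Instead I would set $p = \frac{\log d}{4d}$, choose $A$ to be a random vertex subset with $|A| = \lceil pn \rceil$, and let $B = \{ v \notin A : N(v) \cap A = \emptyset \}$ be the vertices having no neighbour in $A$. By construction there is no edge between $A$ and $B$, so $z(G) \ge \min(|A|,|B|)$. The size $|A| = \lceil pn \rceil \ge \frac{\log d}{4d}\,n$ is fixed, so it remains to bound $\mathbb{E}|B|$. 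Writing $\Pr[v \in B]$ in terms of $\deg v$ and applying convexity of $t \mapsto (1-p)^t$ (Jensen) against the average-degree hypothesis gives $\mathbb{E}|B| \gtrsim n(1-p)^{d+1} \approx n\,d^{-1/4}$, which dwarfs the target $\frac{\log d}{4d}\,n$. Hence some outcome has $|B| \ge \frac{\log d}{4d}\,n$ while $|A|$ meets its bound, giving $z(G) \ge \frac{\log d}{4d}\,n$ and therefore $\iota(G) \le n - z(G) \le \left(1 - \frac{\log d}{4d}\right)n$.

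For part~(ii), I would take $G$ to be a uniformly random graph on $n$ vertices with exactly $m = \lfloor dn/2 \rfloor$ edges, which automatically has average degree at most $d$, and show by a first-moment computation that typically $z(G) \le \frac{2\log d}{d}\,n$. Indeed, for disjoint $s$-sets $A,B$ the probability that no edge joins them is at most $e^{-\Theta(ds^2/n)}$, while the number of such pairs is at most $\binom{n}{s}^2 \le (en/s)^{2s}$; taking $s = \lceil \frac{2\log d}{d}\,n \rceil$ makes the expected number of bi-independent $s$-pairs smaller than $1$, so some such $G$ has $z(G) < s$. Substituting into $\iota(G) \ge n - 2z(G)$ then yields $\iota(G) \ge \left(1 - \frac{4\log d}{d}\right)n$. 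For the small values of $d$ where the claimed bound is non-positive the statement is vacuous, so it suffices to run this argument for large $d$.

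I expect the routine but delicate point --- and the main obstacle --- to lie in part~(i): when $A$ has fixed size, the event $\{v \in B\}$ has probability $\binom{n - \deg v - 1}{|A|}/\binom{n}{|A|}$ rather than $(1-p)^{\deg v + 1}$, and controlling this ratio requires $\deg v$ to be small compared with $n$. I would handle this by restricting to the at least $n/2$ vertices of degree at most $2d$ (Markov's inequality) and using $n \ge 48d$ to ensure the resulting correction factors stay bounded; these are precisely the places where the hypothesis $n \ge 48d$ and the constant $\frac14$ enter. The first-moment estimate in part~(ii), by contrast, is comparatively standard.
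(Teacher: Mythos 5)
Your proposal is correct and follows essentially the same route as the paper: both parts are reduced to bounds on $z(G)$ via Proposition~\ref{prop:z(G)}, part (i) is proved by taking a random vertex set $A$ of density roughly $\log d/d$ together with the set $B$ of vertices with no neighbour in $A$, and part (ii) by a first-moment union bound over a sparse random graph with about $dn/2$ edges. The only differences are implementation choices --- you fix $|A|$ exactly (hence the hypergeometric correction and degree truncation you flag, which do work out under $n \ge 48d$) where the paper samples vertices independently with probability $\frac{\log d}{2d}$ and uses Chernoff plus a reverse-Markov estimate for $|B|$, and you use the uniform random graph with a fixed number of edges where the paper uses $G(n,d/n)$ and a median argument for the edge count --- neither of which changes the substance.
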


\begin{proof}
For both parts, we shall appeal to Proposition~\ref{prop:z(G)}, which asserts that $n - 2z(G) \le \iota(G) \le n - z(G)$, where $z(G)$ is the largest $z$ such that $G$ contains two disjoint sets of $z$ vertices that have no edges between them.

\medskip

For part (i), we need to show that $z(G) \ge \frac{n \log d}{4 d}$ for all such graphs $G$. This follows from the K\"ov\'ari--S\'os--Tur\'an Theorem~\cite{KST}, applied to the complement of $G$. For the sake of completeness, though, we provide a simple probabilistic proof.

Let $A$ be a random subset of $V = V(G)$ obtained by selecting each vertex of $G$ independently with probability $p = \frac{\log d}{2 d}$. The size of $A$ is then a binomial random variable, and the Chernoff bound shows that $|A| \ge \frac{n \log d}{4 d}$ with probability at least $1 - e^{- n \log d / (16 d)}$, which is at least $1 - \frac{1}{d^3}$.

We now define $B$ to be the set of all vertices in $V \setminus A$ that have no neighbors in $A$. Note that if a vertex $v$ has degree $d_v$, then $\mathbb{P}(v \in B) = (1 - p)^{d_v + 1}$, as we need that neither $v$ nor any of its $d_v$ neighbors belong to $A$. Thus, the expected size of $B$ is $\sum_{v \in V} (1 - p)^{d_v + 1}$. Since $(1-p)^x$ is a convex function, and the average degree is at most $d$, we have
\[ \mathbb{E}[|B|] = \sum_{v \in V} (1-p)^{d_v + 1} \ge n (1-p)^{d + 1} = n \left( 1 - \frac{\log d}{2d} \right)^{d+1}. \]
Computation shows that this is at least $\frac{3n}{4 d^{1/2}}$. Since $|B|$ cannot be larger than $n$, we have
\[ \frac{3n}{4d^{1/2}} \le \mathbb{E}[|B|] \le n \mathbb{P} \left( |B| \ge \frac{n}{2 d^{1/2}} \right) + \frac{n}{2d^{1/2}} \mathbb{P} \left( |B| \le \frac{n}{2d^{1/2}} \right) \le n \mathbb{P}\left( |B| \ge \frac{n}{2d^{1/2}} \right) + \frac{n}{2d^{1/2}}, \]
whence it follows that $\mathbb{P} \left( |B| \ge \frac{n}{2d^{1/2}} \right) \ge \frac{1}{4d^{1/2}} > \frac{1}{d^3}$.

Hence, with positive probability, we have both $|A| \ge \frac{n \log d}{4d}$ and $|B| \ge \frac{n}{2 d^{1/2}} \ge \frac{n \log d}{4 d}$, and the existence of such a pair of sets shows $z(G) \ge \frac{n \log d}{4d}$, as required.

\medskip

For part (ii), we need to show the existence of a graph $G$, of average degree at most $d$, for which $z(G) \le \frac{2 n \log d}{d}$. Note that we always have $z(G) \le \frac{n}{2}$, and so this is trivial if $d \le 8$. 

Now consider the random graph $G \left( n,\frac{d}{n} \right)$, where every edge is present independently with probability $p = \frac{d}{n}$. The number of edges is a binomial random variable, whose median is at most $\left \lceil \frac{d(n-1)}{2} \right \rceil$, and hence $\mathbb{P}\left(e(G) > \frac{dn}{2} \right) < \frac12$.

We can use a straightforward union bound to show that there are no large subsets without any edges between them. Indeed, the expected number of pairs of sets of $z$ vertices such that all $z^2$ cross-edges are missing is at most 
\[ \binom{n}{z}^2 (1-p)^{z^2} \le \left( \frac{ne}{z} \right)^{2z} e^{-pz^2} = \left( \frac{n^2 e^2 }{z^2 e^{pz} } \right)^z. \]

Substituting our choice of $z = \frac{2 n \log d}{d}$, this simplifies to $\left( \frac{e^2}{4 \log^2 d} \right)^{\frac{2n \log d}{d}}$. Since $d \ge 8$, this is at most $2^{-\frac{2n \log d}{d}}$, which is less than $\frac12$.

Thus, with positive probability, $G$ is such that $e(G) \le \frac{dn}{2}$ and $z(G) \le \frac{2n \log d}{d}$, as required.
\end{proof}

\begin{remark}
We have dealt with graphs of bounded average degree for simplicity, so that we could use the binomial random graph in part (ii). If one is primarily interested in $d$-regular graphs, as we have been using in this paper, then the upper bound in part (i) naturally still applies. For the lower bound in part (ii), one must replace the binomial random graph with the random $d$-regular graph. At the expense of more complicated calculations, a similar bound can be shown, provided $d$ is not too small.
\end{remark}

\end{document}